\newtheoremstyle{standard}%
{9pt}%
{9pt}%
{\it}
\newcommand{\db}[1]{(\!({#1})\!)}
\newcommand{\subL}{K}
\numberwithin{equation}{section}
\newcommand{\N}{{\mathbb N}}
\newcommand{\Z}{{\mathbb Z}}
\newcommand{\Q}{{\mathbb Q}}
\newcommand{\C}{{\mathbb C}}
\newcommand{\wh}{h}
\newcommand{\Har}{H}
\newcommand{\wx}{x}
\newcommand{\rankL}{d}
\newcommand{\mN}{N}
\newcommand{\mW}{W}
\newcommand{\module}{M}
\newcommand{\mn}{p}
\newcommand{\hei}{{\mathfrak h}}
\newcommand{\wideomega}{\omega}
\newcommand{\wideHar}{\Har}
\newcommand{\zero}{\mathbf 0}
\newcommand{\fh}{{\mathfrak h}}
\newcommand{\lu}{u}
\newcommand{\lw}{w}
\newcommand{\lv}{v}
\newcommand{\mK}{K}
\newcommand{\lE}{t}
\newcommand{\ExB}{E}
\newcommand{\sv}{P}
\newcommand{\vac}{{\mathbf 1}}
\newcommand{\lattice}{L}
\newcommand{\nS}{S}
\newcommand{\Harfour}{\Har}
\newcommand{\Harfourone}{\Har^{[1]}}
\newcommand{\Harsix}{\Har^{\langle 6\rangle}}
\DeclareMathOperator{\Ext}{Ext} 
\DeclareMathOperator{\id}{id}
\DeclareMathOperator{\Span}{Span} 
\DeclareMathOperator{\wt}{wt}
\DeclareMathOperator{\rank}{rank}
\newtheorem{lemma}{Lemma}[section]
\newtheorem{theorem}[lemma]{Theorem}
\newtheorem{corollary}[lemma]{Corollary}
\theoremstyle{definition}
\newtheorem{definition}[lemma]{Definition}
\newtheorem{remark}[lemma]{Remark}
\theoremstyle{standard}
\title{Representations of the fixed point subalgebra of the vertex algebra associated to
a non-degenerate even lattice by an automorphism of order $2$}
\author{Kenichiro Tanabe\footnote{Research was partially supported by the Grant-in-aid
(No. 18K03198) for Scientific Research, JSPS.}\\\\
Department of Mathematics\\
Hokkaido University\\
Kita 10, Nishi 8, Kita-Ku, Sapporo, Hokkaido, 060-0810\\
Japan\\\\
ktanabe@math.sci.hokudai.ac.jp}
\date{}
\begin{document}
\maketitle
\begin{abstract}
Let $V_{\lattice}$ be the vertex algebra  associated to a non-degenerate even lattice $\lattice$,
$\theta$ the automorphism of $V_{\lattice}$ induced from the $-1$-isometry of $\lattice$, and
$V_{\lattice}^{+}$ the fixed point subalgebra of $V_{\lattice}$ under the action of $\theta$.
We show that every  weak $V_{\lattice}^{+}$-module is completely reducible.
\end{abstract}
\section{Introduction} 
\textcolor{black}{The vertex (operator) algebras} $V$ such that every ($\N$-graded) weak $V$-module is completely reducible are a fundamental class of vertex (operator) algebras
as with groups, rings, and Lie algebras.
In the case of weak modules, a typical example of such a vertex algebra is the vertex algebra $V_{\lattice}$ associated to a non-degenerate even lattice $\lattice$ \cite{Dong1993,DLM1997,FLM}.
A vertex operator algebra $V$ is called {\it rational} (resp. {\it regular}) if every $\N$-graded weak (resp. weak) $V$-module is completely reducible.
Thus, the vertex operator algebra $V_{\lattice}$ associated to a positive definite lattice $\lattice$ is an example of regular vertex operator algebra,
where we note that the vertex algebra $V_{\lattice}$ is a vertex operator algebra
only in the case when $L$ is positive definite.
Examples of other regular, and hence rational vertex operator algebras %$V=\oplus_{n=0}^{\infty}V_n$ with $V_{0}=\C\vac$ 
include the moonshine vertex operator algebra \textcolor{black}{$V^{\natural}$} \cite{Dong1994-moonshine, FLM},
vertex operator algebras associated to the irreducible vacuum representations for affine Kac--Moody algebras with positive integer levels \cite{DLM1997, Frenkel-Zhu1992},
vertex operator algebras of \textcolor{black}{irreducible highest weight modules for the Virasoro algebra of highest weight zero and minimal central charges} \cite{DLM1997, Frenkel-Zhu1992, Wang1993},
the vertex operator algebras associated to minimal representations of $W$-algebras \cite{arakawa2015}.
In most cases above, the Zhu algebras, which are associative $\C$-algebras introduced in \cite{Z1996},
play an essential role in showing the rationality since \cite[Theorem 2.2.1]{Z1996} shows that
for a vertex operator algebra $V$ there is a one to one correspondence between the set of all isomorphism classes of irreducible $\N$-graded weak $V$-modules 
and that of irreducible modules for the Zhu algebra associated to $V$.
We remark that
 if a vertex operator algebra $V=\oplus_{n=0}^{\infty}V_n$ with $V_{0}=\C\vac$
is regular, then every weak $V$-module is $\N$-graded by \cite[Theorem 4.5 and Proposition 5.7]{ABD2004}.

It is expected that the property of a simple vertex operator algebra of being rational is preserved by taking the fixed point subalgebra
under the action of any finite automorphism group.
Namely, let $V$ be a vertex algebra, $G$ a finite automorphism group of $V$,
and $V^G$ the fixed point subalgebra of $V$ under the action of $G$: $V^{G}=\{u\in V\ |\ gu=u\mbox{ for all }g\in G\}$.
It is conjectured that if a simple vertex operator algebra $V$ is rational, then so is $V^{G}$ \cite{DVVV1989}.
This conjecture is an analogue of a result due to Levitzki (\cite{Levitzki1935}, see also \cite[Theorem 1.15]{Montgomery1980}) in ring theory.
Let  $\lattice$ be a non-degenerate even lattice and $\theta$ the automorphism of $V_{\lattice}$ induced from the $-1$-isometry of $\lattice$.
The conjecture has been confirmed in the vertex operator algebra $V_{\lattice}^{+}:=V_{\lattice}^{\langle\theta \rangle}$ \cite{Abe2005, DJL2012}, where $L$ is a positive definite even lattice.
It is worth mentioning that the moonshine vertex operator algebra 
$V^{\natural}$ is constructed as a direct sum of $V_{\Lambda}^{+}$ and an irreducible $V_{\Lambda}^{+}$-module
in \cite{FLM} where $\Lambda$ is the Leech lattice.
This example demonstrates the importance of the fixed point vertex subalgebras, in particular $V_{\lattice}^{+}$.
When $\lattice$ is not positive definite, the vertex algebra $V_{\lattice}$ is not a vertex operator algebra as mentioned above, however,
it is established  in \cite[Theorems 3.8 and 3.16]{Yamsk2009} that every $\N$-graded weak $V_{\lattice}^{+}$-module is completely reducible by using the Zhu algebra.
There are some other examples in which the conjecture is confirmed (cf. \cite{DLaTYY2004, TY2007, TY2013}).

\textcolor{black}{Contrary} to the case of $\N$-graded weak modules, the study of non-$\N$-graded weak modules 
 for vertex (operator) algebras has not progressed
and is widely known to be difficult because of the absence of useful tool like the Zhu algebras for non-$\N$-graded weak modules. 
So far, the vertex algebras $V_{\lattice}$ such that $\lattice$ are not positive definite are only one class of vertex algebras which are not vertex operator algebras and whose weak modules are well understood.
For some vertex operator algebras, the irreducible weak modules with Whittaker vectors, which are non-$\N$-graded, are classified in \cite{Tanabe2017,Tanabe2020}.  

The aim of this paper is the study of weak $V_{\lattice}^{+}$-modules for any non-degenerate even lattice $\lattice$ of finite rank.
More precisely, as a continuation of  \cite[Theorem 1.1]{Tanabe2019} which classifies the irreducible weak $V_{\lattice}^{+}$-modules,
we shall show the following result, which means that an analogue of the conjecture for 
the fixed point vertex operator subalgebras above holds for the vertex algebra $V_{\lattice}^{+}$:
\begin{theorem}
\label{theorem:main}
Every weak $V_{\lattice}^{+}$-module is completely reducible
for any non-degenerate even lattice $\lattice$ of finite rank.
\end{theorem}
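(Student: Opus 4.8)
The plan is to deduce the theorem from two inputs: the complete reducibility of every weak $V_{\lattice}$-module \cite{Dong1993,DLM1997,FLM} and of every weak $\theta$-twisted $V_{\lattice}$-module (established by analogous arguments), together with the classification of the irreducible weak $V_{\lattice}^{+}$-modules in \cite[Theorem 1.1]{Tanabe2019}. Since a module that is a sum of completely reducible weak submodules is itself a sum of irreducible weak submodules, hence completely reducible, it suffices to prove that every \emph{cyclic} weak $V_{\lattice}^{+}$-module is completely reducible; and for this it is enough to realize each such module $M$ as a direct summand, \emph{as a weak $V_{\lattice}^{+}$-module}, of a weak $V_{\lattice}$-module or of a weak $\theta$-twisted $V_{\lattice}$-module, since restriction to $V_{\lattice}^{+}$ sends an irreducible (twisted or untwisted) weak $V_{\lattice}$-module to a sum of at most two irreducible weak $V_{\lattice}^{+}$-modules, by the $\Z_{2}$-extension analogue of Clifford theory.

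The main construction exploits the $\Z_{2}$-extension structure $V_{\lattice}=V_{\lattice}^{+}\oplus V_{\lattice}^{-}$, in which $V_{\lattice}^{-}$ is an irreducible $V_{\lattice}^{+}$-module of order two under fusion; together with the irreducible $\theta$-twisted $V_{\lattice}$-modules $V_{\lattice}^{T_{i}}$ and the intertwining operators among all of these, $V_{\lattice}^{+}$ sits inside an abelian intertwining algebra as its untwisted, $\theta$-invariant part. Given a cyclic weak $V_{\lattice}^{+}$-module $M$, one attaches to it a weak module $\widehat{M}$ over $V_{\lattice}$ or over one of the $V_{\lattice}^{T_{i}}$: concretely $\widehat{M}=M\oplus(V_{\lattice}^{-}\boxtimes M)$, the modes of $V_{\lattice}^{-}$ acting through the appropriate intertwining operators, and the three possibilities for $\widehat{M}$ — an untwisted $V_{\lattice}$-module, a $\theta$-twisted $V_{\lattice}$-module, or a direct sum $M\oplus M'$ with $M'$ an inequivalent fusion partner of $M$ — are distinguished by the ``type'' of $M$. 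In every case $M$ is a $V_{\lattice}^{+}$-direct summand of $\widehat{M}$, and $\widehat{M}$ is completely reducible by the inputs above; hence so is $M$.

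To carry this out one must organize the weak $V_{\lattice}^{+}$-modules into the finitely many types above, matching the families produced by \cite[Theorem 1.1]{Tanabe2019}. This is done by analysing the action on $M$ of the weight-two subspace of $V_{\lattice}^{+}$ — the conformal vector, the vectors $h(-1)^{2}\vac$, and, when $\lattice$ contains vectors of norm $2$, the vectors $e^{\alpha}+e^{-\alpha}$ — and of the Heisenberg operators $h(n)$, and then checking, type by type, that the required intertwining operators exist and that the operators defining $\widehat{M}$ satisfy the (possibly $\theta$-twisted) Jacobi identity.

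The essential difficulty, and the reason the theorem is not covered by the positive definite case \cite{Abe2005,DJL2012} or by the $\N$-graded case \cite{Yamsk2009}, is that when $\lattice$ is not positive definite there are irreducible weak $V_{\lattice}^{+}$-modules admitting no $\N$-grading: in the directions along which the form of $\lattice$ is non-positive the Heisenberg operators act with unbounded, infinite-dimensional generalized eigenspaces of Whittaker type, so neither an $L(0)$-grading nor a Zhu-algebra argument is available. For these modules the vertex operators of $\widehat{M}$ must be written down explicitly and the twisted Jacobi identity verified by direct computation with the modes $h(n)$ and $e^{\alpha}_{k}$; that verification, together with the bookkeeping showing that the finitely many types exhaust all weak $V_{\lattice}^{+}$-modules and that the passage $M\mapsto\widehat{M}$ is compatible with morphisms, is where the real work lies.
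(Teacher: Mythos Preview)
Your proposal has a genuine gap at its core: the induction construction $M\mapsto\widehat{M}=M\oplus(V_{\lattice}^{-}\boxtimes M)$ is not available for arbitrary weak $V_{\lattice}^{+}$-modules. Fusion tensor products and the associated intertwining-operator machinery are developed for suitably graded modules (typically $\N$-graded, or at worst $\C$-graded with finite-dimensional weight spaces), and you yourself note that the interesting weak modules here admit no such grading. Without a grading you have no control over the truncation of intertwining operators, no logarithmic tensor theory, and no way to \emph{define} $V_{\lattice}^{-}\boxtimes M$ as a weak $V_{\lattice}^{+}$-module, let alone to verify that $\widehat{M}$ satisfies the (twisted) Jacobi identity. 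Your final paragraph concedes that the Jacobi identity must be checked ``by direct computation,'' but for a \emph{cyclic} $M$ that could itself be a non-split extension of two irreducibles, there is nothing to compute with: the generator has no known spectral properties, and the ``type'' analysis you sketch (via the weight-two subspace and Heisenberg operators) applies only to irreducible $M$, where the answer is already given by \cite[Theorem~1.1]{Tanabe2019}. In effect, the passage from ``irreducibles embed in (twisted) $V_{\lattice}$-modules'' to ``arbitrary weak modules do'' is exactly the extension problem you are trying to solve, so the argument is circular.

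The paper takes a different route that avoids induction entirely. It first reduces (via \cite[Lemma~7.3]{Tanabe2019}) to showing $\Ext^{1}_{V_{\lattice}^{+}}(\module,\mW)=0$ for every pair of irreducibles $\module,\mW$, and then attacks this pairwise. The key lemma is that for a well-chosen irreducible $M(1)^{+}$-submodule $\mK\subset\module$ (anything other than $M(1,\lambda)$ with $\lambda\in\lattice$), the weak $V_{\lattice}^{+}$-submodule $V_{\lattice}^{+}\cdot\mK$ is already completely reducible; hence splitting the $\Ext^{1}$ reduces to splitting off a copy of $\mK$ inside the extension as an $M(1)^{+}$-module. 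For most pairs this follows from the $M(1)^{+}$-decompositions in \eqref{VL+module-decomposition-M1+} together with \cite[Proposition~5.8]{Tanabe2019} and \cite{Yamsk2009}. The residual cases $(V_{\lattice}^{\pm},V_{\lattice}^{\mp})$, $(V_{\lambda+\lattice},V_{\lambda+\lattice})$, and $(V_{\lambda+\lattice}^{\rho},V_{\lambda+\lattice}^{\sigma})$ are handled by explicit computations with the singular vectors $Q^{(4)},Q^{(5,1)},Q^{(6)},\sv^{(8),H}$ in $V_{\lattice}^{+}$, showing that a preimage of a lowest-weight vector lands in $\Omega_{M(1)^{+}}(\mN)$ and generates the required $M(1)^{+}$-submodule. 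This is where the ``real work'' actually gets done, and it does not rely on any tensor or induction functor.
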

When $\lattice$ is positive definite, this result is shown in \cite[Theorem 7.7]{Abe2005} in the case that $\rank\lattice=1$ and in  
\cite[Theorem 6.5]{DJL2012} in the general case.

Let us explain the basic idea briefly.
Let $\lattice$ be a non-degenerate even lattice.
Since \cite[Lemma 7.3]{Tanabe2019} shows that 
every weak $V_{\lattice}^{+}$-module has an irreducible weak $V_{\lattice}^{+}$-module,
it is enough to show that $\Ext^{1}_{V_{\lattice}^{+}}(\module,\mW)=0$
for any pair of irreducible weak $V_{\lattice}^{+}$-modules $\module$ and $\mW$.
Let $M(1)$ be the Heisenberg vertex operator algebra constructed from $\fh:=\C\otimes_{\Z}\lattice$.
Then, $M(1)$ is a subalgebra of $V_{\lattice}$ and 
the fixed point subspace $M(1)^{+}=\{a\in M(1)\ |\ \theta(a)=a\}$ is a subalgebra of $V_{\lattice}^{+}$.
The irreducible $M(1)^{+}$-modules are classified in \cite[Theorem 4.5]{DN1999-1} in the case that $\dim_{\C}\fh=1$ and in \cite[Theorem 6.2.2]{DN2001} in the general case.
We have classified the irreducible weak $V_{\lattice}^{+}$-modules in  \cite[Theorem 1.1]{Tanabe2019}
and we remark that each irreducible weak $V_{\lattice}^{+}$-module is a direct sum of irreducible $M(1)^{+}$-modules.
We follow the notation in these results.
We first show that for any irreducible $M(1)^{+}$-submodule $\mK$ of a weak $V_{\lattice}^{+}$-module
which is not isomorphic to $M(1,\lambda)$ for any $\lambda\in\lattice$,
the weak $V_{\lattice}^{+}$-submodule generated by $\mK$ is completely reducible (Lemma \ref{lemma:generate-completely}).
We note that each  irreducible weak $V_{\lattice}^{+}$-module includes such an irreducible $M(1)^{+}$-submodule $\mK$.
Thus, it is enough to show that for any pair of irreducible weak $V_{\lattice}^{+}$-modules $\module$ and $\mW$,
any exact sequence of weak $V_{\lattice}^{+}$-modules 
\begin{align}
\label{eq:intor-exact-sequence}
0\rightarrow \mW\rightarrow \mN\overset{\pi}{\rightarrow} \module\rightarrow 0,
\end{align}
and the irreducible $M(1)^{+}$-submodule $\mK$ of $\module$ in Lemma \ref{lemma:generate-completely},
there exists an $M(1)^{+}$-submodule of $\mN$ which is isomorphic to $\mK$ under the restriction of $\pi$ in \eqref{eq:intor-exact-sequence}.
When $(\module,\mW)$ is not isomorphic to  $(V_{\lattice}^{\pm},V_{\lattice}^{\mp})$, $(V_{\lambda+\lattice},V_{\lambda+\lattice})$ with $\lambda\in\lattice^{\perp}$ and $2\lambda\not\in\lattice$,
$(V_{\lambda+\lattice}^{\rho},V_{\lambda+\lattice}^{\sigma})$ with $\lambda\in\lattice^{\perp}\setminus\lattice$, $2\lambda\in\lattice$, and $\rho,\sigma\in\{+,-\}$,
we can carry out this procedure using results in \cite{Tanabe2019} and \cite{Yamsk2009} (Corollary \ref{corollary:many-ext=0}).
%To proceed, we  need to recall the following \textcolor{black}{notation}:
%for a vertex algebra $V$ which admits a decomposition $V=\oplus_{n\in\Z}V_n$ and a subset $U$ of a weak $V$-module, we 
%define
%
For the other $(\module,\mW)$, we take a certain preimage $U$ of the lowest space of $\mK$ under the map $\pi : \mN\rightarrow \module$ in \eqref{eq:intor-exact-sequence}.
If $(\module,\mW)\not\cong(V_{\lattice}^{+},V_{\lattice}^{-})$, then a similar argument as in the proof of \cite[Lemmas 3.10 and 5.5]{Tanabe2019} shows that $U$ is a subspace of 
\begin{align}
\Omega_{M(1)^{+}}(\mN)&=\Big\{\lu\in N\ \Big|\ 
\begin{array}{l}
a_{i}\lu=0\ \mbox{for all homogeneous }a\in V\\
\mbox{and }i>\wt a-1.
\end{array}\Big\}
\end{align}
%$\Omega_{M(1)^{+}}(\mN)$
and is an irreducible module for the Zhu algebra $A(M(1)^{+})$.
It follows from \cite[Corollary 5.9]{Tanabe2019} that the $M(1)^{+}$-submodule of $\mN$ generated by $U$
is isomorphic to $\mK$ (Lemmas \ref{lemma:2lambda(not)inlattice} and \ref{lemma:Ext1V(Vlattice-,V\lattice+}).
If $(\module,\mW)\cong(V_{\lattice}^{+},V_{\lattice}^{-})$, then $U$ is spanned by a preimage $\lu$ of the vacuum element $\vac\in\module$.
We show that $\omega_{0}\lu=0$ and hence the $V_{\lattice}^{+}$-submodule of $\mN$ generated by $\lu$
is isomorphic to $V_{\lattice}^{+}$ (Lemma \ref{lemma:Ext1V(Vlattice+,V\lattice-}). 

Throughout this paper, complicated computation has been done by a computer algebra system Risa/Asir\cite{Risa/Asir}.
%Throughout this paper, the word \lq\lq a direct computation\rq\rq \ means a direct computation with the help of Risa/Asir.

The organization of the paper is as follows. 
In Section \ref{section:preliminary} we recall some basic properties of the
Heisenberg vertex operator algebra $M(1)$,
the vertex algebra $V_{\lattice}$ associated to a non-degenerate even lattice $\lattice$,
and the fixed point subalgebras $M(1)^{+}$ and $V_{\lattice}^{+}$.
In Section \ref{section:main-result} we give a proof
of Theorem \ref{theorem:main}.

\section{\label{section:preliminary}Preliminary}
We assume that the reader is familiar with the basic knowledge on
vertex algebras as presented in \cite{B1986,FLM,LL,Li1996}. 

Throughout this paper, $\mn$ is a non-zero integer,
$\N$ denotes the set of all non-negative integers,
$\Z$ denotes the set of all integers, 
$\lattice$ is a non-degenerate even lattice of finite rank $\rankL$ with a bilinear form $\langle\ ,\ \rangle$,
$\lattice^{\perp}$ is the dual of $\lattice$: $\lattice^{\perp}=\{\alpha\in\Q\otimes_{\Z}\lattice\ |\ \langle\alpha,\beta\rangle\in\Z\mbox{ for all }\beta\in\lattice\}$,
and $(V,Y,{\mathbf 1})$ is a vertex algebra.
Recall that $V$ is the underlying vector space, 
$Y(\mbox{ },\wx)$ is the linear map from $V\otimes_{\C}V$ to $V\db{x}$, and
${\mathbf 1}$ is the vacuum vector.
Throughout this paper, we always assume that $V$ has an element $\omega$ such that $\omega_{0}a=a_{-2}\vac$ for all $a\in V$.
For a vertex operator algebra $V$, this condition automatically holds since $V$ has the conformal vector (Virasoro element).
We write down the definition of a weak $V$-module:
\begin{definition}
\label{definition:weak-module}
A {\it weak $V$-module} $\module$ is a vector space over $\C$ equipped with a linear map
\begin{align}
\label{eq:inter-form}
Y_{\module}(\ , x) : V\otimes_{\C}\module&\rightarrow \module\db{x}\nonumber\\
a\otimes u&\mapsto  Y_{\module}(a, x)\lu=\sum_{n\in\Z}a_{n}\lu x^{-n-1}
\end{align}
such that the following conditions are satisfied:
\begin{enumerate}
\item $Y_{\module}(\vac,x)=\id_{\module}$.
\item
For $a,b\in V$ and $\lu\in \module$,
\begin{align}
\label{eq:inter-borcherds}
&x_0^{-1}\delta(\dfrac{x_1-x_2}{x_0})Y_{\module}(a,x_1)Y_{\module}(b,x_2)\lu-
x_0^{-1}\delta(\dfrac{x_2-x_1}{-x_0})Y_{\module}(b,x_2)Y_{\module}(a,x_1)\lu\nonumber\\
&=x_1^{-1}\delta(\dfrac{x_2+x_0}{x_1})Y_{\module}(Y(a,x_0)b,x_2)\lu.
\end{align}
\end{enumerate}
\end{definition}
For $i\in\Z$, we define
\begin{align}
	\Z_{< i}&=\{j\in\Z\ |\ j< i\}\mbox{ and }\Z_{> i}=\{j\in\Z\ |\ j> i\}.
\end{align}
For $n\in\C$ and a weak $V$-module $\module$, we define $M_{n}=\{\lu\in V\ |\ \omega_1 \lu=n \lu\}$.
For $a\in V_{n}\ (n\in\C)$, $\wt a$ denotes $n$.
For a vertex algebra $V$ which admits a decomposition $V=\oplus_{n\in\Z}V_n$ and a subset $U$ of a weak $V$-module, we 
define
\begin{align}
\label{eq:OmegaV(U)=BigluinU}
\Omega_{V}(U)&=\Big\{\lu\in U\ \Big|\ 
\begin{array}{l}
a_{i}\lu=0\ \mbox{for all homogeneous }a\in V\\
\mbox{and }i>\wt a-1.
\end{array}\Big\}.
\end{align}
For a vertex algebra $V$ which admits a decomposition $V=\oplus_{n\in\Z}V_n$, a weak $V$-module $\mN$
 is called {\it $\N$-graded} if $N$ admits a decomposition $N=\oplus_{n=0}^{\infty}N(n)$
such that $a_{i}N(n)\subset N(\wt a-i-1+n)$ for all homogeneous $a\in V$, $i\in\Z$, and $n\in\Z_{\geq 0}$, where 
we define $N(n)=0$ for all $n<0$. For a triple of weak $V$-modules $\module, \mN,\mW$,
$\lu\in\module, \lv\in\mW$, and an intertwining operator $I(\ ,x)$ from $\module\times \mW$ to $\mN$, 
we write the expansion of $I(u,x)v$ by
\begin{align}
I(\lu,x)\lv
&=\sum_{i\in\C}\lu_i\lv x^{-i-1}
\in \mN\{x\}.
\end{align}
In this paper, we consider only the case that the image of $I(\mbox{ },x)$ is contained in
$\mN\db{x}$,
namely $I(\ ,x) : \module\times \mW\rightarrow \mN\db{x}$. 
For a subset $X$ of $\mW$,
\begin{align}
M\cdot X\mbox{ denotes }\Span_{\C}\{a_{i}\lu\ |\ a\in \module, i\in\Z, \lu\in X\}\subset N. 
\end{align}
For an intertwining operator $I(\mbox{ },x) : \module\times \mW\rightarrow \mN\db{x}$,
$\lu\in\module$, and $\lv\in \mW$, we define  $\epsilon_{I}(\lu,\lv)=\epsilon(\lu,\lv)\in\Z\cup\{-\infty\}$ by
\begin{align}
\label{eqn:max-vanish}
\lu_{\epsilon_{I}(\lu,\lv)}\lv&\neq 0\mbox{ and }\lu_{i}\lv
=0\mbox{ for all }i>\epsilon_{I}(\lu,\lv)
\end{align}
if $I(\lu,x)\lv\neq 0$ and $\epsilon_{I}(\lu,\lv)=-\infty$
if $I(\lu,x)\lv= 0$.
If $\module$ is irreducible, then $\epsilon_{I}(\lu,\lv)\in\Z$ by \cite[Proposition 11.9]{DL}.
For a weak module $(\module,Y_{\module})$, we write $\epsilon_{\module}=\epsilon_{Y_{\module}}$ for simplicity.

We recall the {\it Zhu algebra} $A(V)$ of a vertex operator algebra $V$ from \cite[Section 2]{Z1996}.
For homogeneous $a\in V$ and $b\in V$, we define
\begin{align}
\label{eq:zhu-ideal-multi}
a\circ b&=\sum_{i=0}^{\infty}\binom{\wt a}{i}a_{i-2}b\in V
\end{align}
and 
\begin{align}
\label{eq:zhu-bimodule-left}
a*b&=\sum_{i=0}^{\infty}\binom{\wt a}{i}a_{i-1}b\in V.
\end{align}
We extend \eqref{eq:zhu-ideal-multi} and \eqref{eq:zhu-bimodule-left} for any $a\in V$ by linearity.
We also define
$O(V)=\Span_{\C}\{a\circ b\ |\ a,b\in V\}$.
Then, the quotient space
\begin{align}
\label{eq:zhu-bimodule}
A(V)&=M/O(V)
\end{align}
called the {\em Zhu algebra} associated to $V$, is an associative $\C$-algebra with multiplication  
\eqref{eq:zhu-bimodule-left} by \cite[Theorem 2.1.1]{Z1996}.
In \cite[Theorem 2.2.1]{Z1996}, Zhu shows that
for a vertex operator algebra $V$ there is a one to one correspondence between the set of all isomorphism classes of irreducible $\N$-graded weak $V$-modules 
and that of irreducible modules for the Zhu algebra associated to $V$.

For a finite dimensional vector space $\hei$ equipped with a non-degenerate symmetric bilinear form
$\langle \mbox{ }, \mbox{ }\rangle$,
$M(1)$ denotes {\it the Heisenberg vertex operator algebra} constructed from $\fh$ (cf. \cite[Section 6.3]{LL}).
As a vector space, $M(1)$ is the symmetric algebra of $\fh\otimes \C[t^{-1}]$.
For $\alpha\in \hei$ and $n\in\Z_{<0}$, $\alpha(n)$ denotes $\alpha\otimes t^{n}\in\fh\otimes \C[t^{-1}]$. 
The conformal vector of $M(1)$ is given by
\begin{align}
\label{eq:conformal-vector}
\omega&=\dfrac{1}{2}\sum_{i=1}^{\dim\fh}h_i(-1)h_i^{\prime}(-1)\vac
\end{align}
where $\{h_1,\ldots,h_{\dim\fh}\}$ is a basis of $\fh$ and
$\{h_1^{\prime},\ldots,h_{\dim\fh}^{\prime}\}$ is its dual basis.
For $\beta\in\fh$, $M(1,\beta)$ denotes the irreducible $M(1)$-module generated by 
the vector $e^{\beta}$ such that $(\alpha(-1)\vac)_{0}e^{\beta}=\langle\alpha,\beta\rangle e^{\beta}$
and $(\alpha(-1)\vac)_{n}e^{\beta}=0$ for  all $\alpha\in\fh$ and $n\in\Z_{>0}$.

Let  $\hat{\lattice}$ be the canonical central extension of $\lattice$
by the cyclic group $\langle\kappa\rangle$ of order $2$ with  the commutator map
$c(\alpha,\beta)=\kappa^{\langle\alpha,\beta\rangle}$ for $\alpha,\beta\in\lattice$:
\begin{align}
	0\rightarrow \langle\kappa\rangle\overset{}{\rightarrow} \hat{\lattice}\overset{-}{\rightarrow} \lattice\rightarrow 0.
\end{align}
Taking $M(1)$ for $\fh:=\C\otimes_{\Z}\lattice$,
we define $V_{\lambda+\lattice}:=\oplus_{\beta\in\lambda+\lattice}M(1,\beta)$
for $\lambda+\lattice\in \lattice^{\perp}/\lattice$.
Then, $V_{\lattice}$ admits a unique vertex algebra structure compatible with the action of $M(1)$ and
is called {\it the lattice vertex algebra} (cf. \cite[Section 6.4]{LL}). 
For each $\lambda+\lattice\in\lattice^{\perp}/\lattice$
the $M(1)$-module
$V_{\lambda+\lattice}$ is an irreducible weak $V_{\lattice}$-module which admits the following decomposition:
\begin{align}
V_{\lambda+\lattice}&=\bigoplus_{n\in\langle\lambda,\lambda\rangle/2+\Z}(V_{\lambda+\lattice})_{n} \mbox{ where }
(V_{\lambda+\lattice})_{n}=\{a\in V_{\lambda+\lattice}\ |\ \omega_{1}a=na\}.
\end{align}
It is shown in \cite[Theorem 3.1]{Dong1993} that
$\{V_{\lambda+\lattice}\ |\ \lambda+\lattice\in \lattice^{\perp}/L\}$ is a complete set of representatives of equivalence classes of the irreducible weak $V_{\lattice}$-modules
where $L^{\perp}$ is the dual lattice of $\lattice$, and in \cite[Theorem 3.16]{DLM1997} that every weak $V_{\lattice}$-module is completely reducible. 
Note that if $\lattice$ is positive definite, then $\dim_{\C}(V_{\lambda+\lattice})_{n}<+\infty$
for all $n\in \lambda+\lattice$
and $(V_{\lambda+\lattice})_{\langle\lambda,\lambda\rangle/2+i}=0$ for sufficiently small $i\in\Z$.
If $\lattice$ is not positive definite, then  
\begin{align}
\label{eq:dimC(Vlambda+lattice)n=+infty}
\dim_{\C}(V_{\lambda+\lattice})_{n}=+\infty
\end{align}
for all $n\in \langle\lambda,\lambda\rangle/2+\Z$, which implies that $V_{\lambda+\lattice}$ is not a $V_{\lattice}$-module.
The $-1$-isometry of $\lattice$ induces an automorphism of $\hat{\lattice}$ of order $2$ and an automorphism of $V_{\lattice}$ of order $2$.
By abuse of notation we denote these automorphisms by the same symbol $\theta$.
For a weak $V_{\lattice}$-module $\module$,
we define a weak $V_{\lattice}$-module $(\module\circ \theta,Y_{\module\circ \theta})$
by $\module\circ\theta=\module$ and 
\begin{align}
Y_{\module\circ \theta}(a,x)&=Y_{\module}(\theta(a),x)
\end{align}
for  $a\in V_{\lattice}$.
Then
$V_{\lambda+\lattice}\circ\theta\cong V_{-\lambda+\lattice}$
for $\lambda\in \lattice^{\perp}$.
Thus, for $\lambda\in \lattice^{\perp}$ with $2\lambda\in\lattice$
we define
\begin{align}
V_{\lambda+\lattice}^{\pm}&=\{u\in V_{\lambda+\lattice}\ |\ \theta(u)=\pm u\}.
\end{align}
For $\alpha\in \fh$, we define
\begin{align}
E(\alpha)&=e^{\alpha}+\theta(e^{\alpha}).
\end{align}
Let $\alpha\in\lattice\setminus\{0\}$. By \cite[(6.4.62)]{LL}, we have
\begin{align}
\label{eq:epsilonVlambda+lattice(E(alpha)-1}
\textcolor{black}{\epsilon_{V_{\lambda+\lattice}}(E(\alpha),e^{\lambda})}&=|\langle\alpha,\lambda\rangle|-1
\end{align}
for $\lambda\in\lattice^{\perp}\setminus\{0, \pm\alpha\}$
and 
\begin{align}
\label{eq:epsilonVlambda+lattice(E(alpha)-2}
\textcolor{black}{\epsilon_{V_{\lambda+\lattice}^{\pm}}(E(\alpha),e^{\lambda}+\theta(e^{\lambda}))}&=|\langle\alpha,\lambda\rangle|-1.
\end{align}
for $\lambda\in\lattice^{\perp}\setminus\{0,\pm\alpha\}$ with $2\lambda\in\lattice$.

Set a submodule $\subL=\{\theta(a) a^{-1}\ |\ a\in\hat{\lattice}\}$ of $\hat{\lattice}$.
The vector space $M(1)(\theta)$ denotes the $\theta$-twisted $M(1)$-module,
$T_{\chi}$ denotes the irreducible $\hat{L}/\subL$-module associated to a
central character $\chi$ such that $\chi(\kappa)=-1$, and
$V_{\lattice}^{T_{\chi}}$ denotes the irreducible $\theta$-twisted $V_{\lattice}$-module
associated to $\chi$ (cf. \cite[Section 9]{FLM} and \cite{Dong1994}).
We define 
\begin{align}
M(1)(\theta)^{\pm}&=\{u\in M(1)(\theta)\ |\ \theta(u)=\pm u\},\nonumber\\
(V_{\lattice}^{T_{\chi}})^{\pm}&=\{u\in V_{\lattice}^{T_{\chi}}\ |\ \theta(u)=\pm u\}.
\end{align}
It is shown in \cite[Theorem 4.5]{DN1999-1} and \cite[Theorem 6.2.2]{DN2001} that 
the following is a complete set of representatives of equivalence classes of the irreducible irreducible $M(1)^{+}$-module:
\begin{align}
M(1)^{\pm}, M(1,\lambda)\cong M(1,-\lambda)\mbox{ with }\lambda\in\fh\setminus\{0\}, M(1)(\theta)^{\pm}.
\end{align}
It is also shown in \cite[Thorem 1.1]{Tanabe2019} that for a non-degenerate even lattice $\lattice$ of finite rank, the following is a complete set of representatives of equivalence classes of the irreducible weak $V_{\lattice}^{+}$-modules:
\begin{enumerate}
\item
$V_{\lambda+\lattice}^{\pm}$, $\lambda+\lattice\in \lattice^{\perp}/\lattice$ with $2\lambda\in \lattice$,   
\item
$V_{\lambda+\lattice}\cong V_{-\lambda+\lattice}$, $\lambda+\lattice\in \lattice^{\perp}/\lattice$ with $2\lambda\not\in \lattice$,   
\item
$V_{\lattice}^{T_{\chi},\pm}$ for any irreducible $\hat{\lattice}/K$-module $T_{\chi}$ with central character $\chi$.
\end{enumerate}
Let $h^{[1]},\ldots,h^{[\rankL]}$ be an orthonormal basis of $\fh$.
For $i=1,\ldots, \rankL$, 
we define
\begin{align}
\label{eq:def-oega-i-H-i}
\omega^{[i]}&=\dfrac{1}{2}h^{[i]}(-1)^2,\nonumber\\
\omega&=\omega^{[1]}+\cdots+\omega^{[\rankL]},\nonumber\\
\Har^{[i]}&=\dfrac{1}{3}h^{[i]}(-3)h^{[i]}(-1)\vac-\dfrac{1}{3}h^{[i]}(-2)^2\vac,\nonumber\\
\Har^{\langle 6\rangle, [i]}&=\dfrac{1}{5}h^{[i]}(-5)h^{[i]}(-1)\vac-\dfrac{13}{10}h^{[i]}(-4)h^{[i]}(-2)\vac+\dfrac{11}{10}(h(-3)^{[i]})^2\vac.
\end{align}
We recall the following notation and some results from \textcolor{black}{\cite[Sections 4 and 5]{DN2001}}:
for any pair of distinct elements $i,j\in\{1,\ldots,\rankL\}$ and $l,m\in\Z_{>0}$, we define
\begin{align}
\nS_{ij}(l,m)&=h^{[i]}(-l)h^{[j]}(-m),\nonumber\\
E^{u}_{ij}&=5\nS_{ij}(1,2)+25\nS_{ij}(1,3)+36\nS_{ij}(1,4)+16\nS_{ij}(1,5),\nonumber\\
E^{t}_{ij}&=-16\nS_{ij}(1,2)+145\nS_{ij}(1,3)+19\nS_{ij}(1,4)+8\nS_{ij}(1,5),\nonumber\\
\Lambda_{ij}
&=45\nS_{ij}(1,2)+190\nS_{ij}(1,3)+240\nS_{ij}(1,4)+96\nS_{ij}(1,5).
\end{align}
It follows from \cite[Proposition 5.3.14]{DN2001} that in $A(M(1)^{+})$, $A^{u}=\oplus_{i,j}\C E^{u}_{ij}$ and 
$A^{t}=\oplus_{i,j}\C E^{t}_{ij}$ are two-sided ideals, each of  which is isomorphic to the $\rankL\times \rankL$ matrix algebra
and $A^{u}A^{t}=A^{t}A^{u}=0$. 
By \cite[Proposition 5.3.15]{DN2001},
$A(M(1)^{+})/(A^{u}+A^{t})$ is a commutative algebra generated by
the images of $\omega^{[i]},\Har^{[i]}$ and $\Lambda_{jk}$ where
$i=1,\ldots,\rankL$ and $j,k\in \{1,\ldots,\rankL\}$ with $j\neq k$.

It is well known that $M(1)^{+}$ is strongly generated by
$\omega^{[i]},\Har^{[i]}$, and $S_{lm}(1,r)$ $(1\leq i\leq \rankL, 1\leq m<l\leq \rankL, r=1,2,3)$
in the sense of \cite[p.111]{Kac1998}.
For a coset $\Gamma\in \lattice^{\perp}/\lattice$ with $\Gamma\neq\lattice$ and  $2\Gamma\subset\lattice$,
we take a subset $S_{\Gamma}$ of $\Gamma$ so that
$\Gamma=S_{\Gamma}\cup (-S_{\Gamma})$ and $S_{\Gamma}\cap (-S_{\Gamma})=\varnothing$
where $-S_{\Gamma}=\{-\mu\ |\ \mu\in S_{\Gamma}\}$.
We also take a subset $S_{\lattice}$ of $\lattice$ so that
 $\lattice\setminus\{0\}=S_{\lattice}\cup (-S_{\lattice})$ and $S_{\lattice}\cap (-S_{\lattice})=\varnothing$.
Then,
\begin{align}
\label{VL+module-decomposition-M1+}
V_{\lattice}^{+}&\cong M(1)^{+}\oplus \bigoplus_{\alpha\in S_{\lattice}}M(1,\alpha),\nonumber\\
V_{\lattice}^{-}&\cong M(1)^{-}\oplus \bigoplus_{\alpha\in S_{\lattice}}M(1,\alpha),\nonumber\\
V_{\lambda+\lattice}&\cong \bigoplus_{\mu\in \lambda+\lattice}M(1,\mu)\mbox{ for $\lambda\in\lattice^{\perp}\setminus\lattice$ with $2\lambda\not\in\lattice$},\nonumber\\
V_{\lambda+\lattice}^{\pm}&\cong \bigoplus_{\mu\in S_{\lambda+\lattice}}M(1,\mu)\mbox{ for $\lambda\in\lattice^{\perp}\setminus\lattice$ with $2\lambda\in\lattice$},\nonumber\\
V_{\lattice}^{T_{\chi},\pm}&\cong M(1)(\theta)^{\pm}\otimes_{\C} T_{\chi}
\end{align}
as $M(1)^{+}$-modules.

The rest of this section we assume $\rank\lattice=\dim_{\C}\fh=1$.
Let $\wh=h^{[1]}$ be an orthonormal basis of $\fh$.
We write $\omega=\omega^{[1]},\Har=\Har^{[1]},\Harsix=\Har^{\langle 6\rangle, [1]}$ for simplicity.
In \cite[(4.4)]{Abe2005}, $\Har$ and $\Harsix$ are denoted by $\Har^{4}$ and $\Har^{6}$ respectively.
We recall the following formulas from \cite[(4.6)--(4.9)]{Abe2005}: for $m\in\Z$,
\begin{align}
[\Harfour_{3},\omega_{m+1}]&=-3m \Harfour_{m+3}-\frac{m^2(m+1)}{2}\omega_{m+1},
\label{eq;[H^4(0),omega(m)]-1}\\
[\Harfour_{3},[\Harfour_{3},\omega_{m+1}]]&=9m^2 \Harsix_{m+5}+\frac{3m^3(5m+9)}{4}\Harfour_{m+3}\nonumber\\
&\quad{}+
\frac{m^3(3m^3+7m^2+3m-3)}{10}\omega_{m+1},
\label{eq;[H^4(0),omega(m)]-2}\\
[\Harsix_{5},\omega_{m+1}]&=-5m \Harsix_{m+5}-\frac{15m^2(m+1)}{4}\Harfour_{m+3}\nonumber \\
&\quad{}-\frac{m^2(m+1)(2m^2+2m-1)}{6}\omega_{m+1},
\label{eq;[H^4(0),omega(m)]-3}\\
[\Harfour_{4},\omega_{m+1}]&=-(3m-1)\Harfour_{m+4}-\frac{m(m+1)(3m+1)}{6}\omega_{m+2}.
\label{eq;[H^4(0),omega(m)]-31}
\end{align}
By \eqref{eq;[H^4(0),omega(m)]-1}--\eqref{eq;[H^4(0),omega(m)]-3}, 
\begin{align}
\label{eq;[H^4(0),omega(m)]-4}
\omega_{m+1}&=
\frac{-5}{m^3}[\Harfour_{3},\omega_{m+1}]+\frac{5}{m^6}[\Harfour_{3},[\Harfour_{3},
\omega_{m+1}]]+\frac{9}{m^5}[\Harsix_{5},\omega_{m+1}]\nonumber\\
&=
\frac{-5}{m^3}(\Harfour_{3}\omega_{m+1}-\omega_{m+1}\Harfour_{3})\nonumber\\
&\quad{}+\frac{5}{m^6}((\Harfour_{3})^2\omega_{m+1}-2\Harfour_{3}\omega_{m+1}\Harfour_{3}+\omega_{m+1}(\Harfour_{3})^2)\nonumber\\
&\quad{}+\frac{9}{m^5}(\Harsix_{5}\omega_{m+1}-\omega_{m+1}\Harsix_{5}).
\end{align}

Let $\lattice=\Z \alpha$. We write $\mn=\langle\alpha,\alpha\rangle\in 2\Z\setminus\{0\}$ and $\ExB=\ExB(\alpha)$ for simplicity.
As stated in \cite[[(3.27), (3.32), (3.33), and (3.35)]{Tanabe2019}, the following elements in $V_{\lattice}^{+}$ are zero:
\begin{align}
\label{eq:-2376omega-2omega-2omega-1vac}
\sv^{(8),H}&=
-2376\omega_{-2 } \omega_{-2 } \omega_{-1 } \vac
+3168\omega_{-3 } \omega_{-1 } \omega_{-1 } \vac
-6256\omega_{-3 } \omega_{-3 } \vac
-11799\omega_{-4 } \omega_{-2 } \vac
\nonumber\\&\quad{}
+30456\omega_{-5 } \omega_{-1 } \vac
+2310\omega_{-7 } \vac
-9504\omega_{-1 } \omega_{-1 } \Har_{-1 } \vac
-6024\omega_{-3 } \Har_{-1 } \vac
\nonumber\\&\quad{}
-13419\omega_{-2 } \Har_{-2 } \vac
-6516\omega_{-1 } \Har_{-3 } \vac
+11868\Har_{-5 } \vac+5040\Har_{-1 }^2 \vac,\\
\label{eq:big(2(mn-2)(-27 + 54mn - 44mn2+ 40mn3)omega-3-1}
Q^{(4)}
&=
2(\mn-2)(-27 + 54 \mn - 44 \mn^2 + 40 \mn^3)\omega_{-3 }\ExB
\nonumber\\&\quad{}
-12\mn (\mn-2) (-3 + 4 \mn)\omega_{-1 }^2\ExB
\nonumber\\&\quad{}
-6 \mn( \mn-2 )  (-9 + 2 \mn) (-1 + 2 \mn)\Har_{-1 }\ExB
\nonumber\\&\quad{}
+(-72\mn^3-96\mn^2+210\mn-90)\omega_{0 } \omega_{-2 }\ExB
\nonumber\\&\quad{}
+(120\mn^2-48\mn+36)\omega_{0 }^2\omega_{-1 }\ExB
\nonumber\\&\quad{}
+(-48\mn-9)\omega_{0 }^4\ExB,\\
Q^{(5,1)}&=
3 (\mn-2) (10 \mn^2-29 \mn+32) (10 \mn^2-4 \mn+3)\omega_{-4 } E
\nonumber\\&\quad{}
-12 \mn (3 \mn-4) (10 \mn^2-4 \mn+3)\omega_{-2 } \omega_{-1 } E
\nonumber\\&\quad{}
-3 (\mn-8) (\mn-2) (2 \mn-1) (10 \mn^2-4 \mn+3)\Har_{-2 } E
\nonumber\\&\quad{}
+8 (2 \mn-7) (15 \mn^3-22 \mn^2+8 \mn-6)\omega_{0 } \omega_{-3 } E
\nonumber\\&\quad{}
+24 \mn^2 (8 \mn-9)\omega_{0 } \omega_{-1 } \omega_{-1 } E
\nonumber\\&\quad{}
-12 (\mn-2) (2 \mn-1) (6 \mn^2-5 \mn+6)\omega_{0 } \Har_{-1 } E
\nonumber\\&\quad{}
-6 (2 \mn^3-32 \mn^2+29 \mn+12)\omega_{0 } \omega_{0 } \omega_{-2 } E
\nonumber\\&\quad{}
-6 (8 \mn-9)\omega_{0 } \omega_{0 } \omega_{0 } \omega_{0 } \omega_{0 } E,
\end{align}
\begin{align}
Q^{(6)}&=
 2 (3696 \mn^8-22564 \mn^7+66284 \mn^6-84937 \mn^5+56207 \mn^4
\nonumber\\&\qquad{}
-91528 \mn^3+11774 \mn^2+29190 \mn-13500)\omega_{-5 } E
\nonumber\\&\quad{}
-4 \mn (352 \mn^6+2152 \mn^5-8282 \mn^4+7951 \mn^3-11696 \mn^2\nonumber\\&\qquad{}
+6304 \mn-1542)\omega_{-3 } \omega_{-1 } E
\nonumber\\&\quad{}
-3 \mn (1584 \mn^6-5572 \mn^5+6456 \mn^4-6877 \mn^3+5214 \mn^2\nonumber\\&\qquad{}
-3040 \mn+642)\omega_{-2 } \omega_{-2 } E
\nonumber\\&\quad{}
+720 \mn^3 (\mn-2) (4 \mn-1)\omega_{-1 } \omega_{-1 } \omega_{-1 } E
\nonumber\\&\quad{}
-24 \mn (\mn-2) (2 \mn-1) (44 \mn^4-98 \mn^3+157 \mn^2-88 \mn+48)\omega_{-1 } \Har_{-1 } E
\nonumber\\&\quad{}
-3 (\mn-2) (2 \mn-25) (2 \mn-1)^2 (44 \mn^4-13 \mn^3+62 \mn^2-48 \mn+18)\Har_{-3 } E
\nonumber\\&\quad{}
+3 (1760 \mn^7-9382 \mn^6+1391 \mn^5+28130 \mn^4-14380 \mn^3\nonumber\\&\qquad{}
+29762 \mn^2-25851 \mn+7650)\omega_{0 } \omega_{-4 } E
\nonumber\\&\quad{}
+12 \mn (352 \mn^5-1459 \mn^4+2396 \mn^3-2894 \mn^2+1254 \mn-225)\omega_{0 } \omega_{-2 } \omega_{-1 } E
\nonumber\\&\quad{}
-3 (\mn-2) (2 \mn-1) (352 \mn^5+101 \mn^4+86 \mn^3-614 \mn^2+804 \mn-225)\omega_{0 } \Har_{-2 } E
\nonumber\\&\quad{}
+12 (88 \mn^6+1104 \mn^5-4136 \mn^4+3714 \mn^3-3944 \mn^2+2670 \mn-675)\omega_{0 } \omega_{0 } \omega_{-3 } E
\nonumber\\&\quad{}
-6 (352 \mn^5-1099 \mn^4+686 \mn^3-689 \mn^2+804 \mn-225)\omega_{0 } \omega_{0 } \omega_{0 } \omega_{-2 } E
\nonumber\\&\quad{}
-90 (\mn-2) (4 \mn-1)\omega_{0 } \omega_{0 } \omega_{0 } \omega_{0 } \omega_{0 } \omega_{0 } E.
\end{align}

The following result will be used in Lemma \ref{lemma:Ext1V(Vlattice-,V\lattice+}.
\begin{lemma}
\label{lemma:M(1)-minus-expansion}
Let $\lattice=\Z \alpha$ be an even lattice of rank one 
such that $\langle\alpha,\alpha\rangle\not\in\{0,2\}$
and
$\module$ a weak $V_{\lattice}^{+}$-module such that $V_{\lattice}^{+}$ is a submodule of $\module$.
Let $u$ be a non-zero element of $\module$ such that  
$\wideHar_{4}\lu, \wideomega_{2}\lu\in \C\vac\subset V_{\lattice}^{+}$, and $\wideHar_{4+i}\lu=\wideomega_{2+i}\lu=0$ for all $i\in\Z_{>0}$.
\begin{enumerate}
\item If  $\epsilon_{\module}(\ExB(\alpha),\lu)\geq 0$, $\wideomega_{1}\lu=\wideHar_{3}\lu=\lu$, and $\Har_{4}\lu=0$, then $\epsilon_{\module}(\omega,\lu)=1$
and $\epsilon_{\module}(\ExB(\alpha),\lu)=0$.
\item If  $\epsilon_{\module}(\ExB(\alpha),\lu)\geq -1$, $\wideomega_{1}\lu=\wideHar_{3}\lu=0$, and  $\Har_{4}\lu=(-1/3)\omega_{2}\lu$, then $\epsilon_{\module}(\omega,\lu)\leq 0$ and $\epsilon_{\module}(\ExB(\alpha),\lu)=-1$.
\end{enumerate}
\end{lemma}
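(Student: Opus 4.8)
The idea is to exploit the identity \eqref{eq;[H^4(0),omega(m)]-4}, which expresses each operator $\omega_{m+1}$ (for $m\ne 0$) as a noncommutative polynomial in $\Harfour_{3}$, $\Harsix_{5}$, and $\omega_{m+1}$ again — but with the ``outer'' occurrences always being $\Harfour_{3},\Harfour_{3}^{2},\Harsix_{5}$ acting first. The hypotheses pin down the action of $\wideomega_{2},\wideomega_{3},\dots$ and $\wideHar_{4},\wideHar_{5},\dots$ on $\lu$, and one knows $\omega_{1}\lu$ and $\Har_{3}\lu$ explicitly in each case; the plan is to feed these data into \eqref{eq;[H^4(0),omega(m)]-4} and into the analogous relations that come from applying $\Harfour_{3}$ and $\Harsix_{5}$ to $\lu$, together with the vanishing relations $\sv^{(8),H}$, $Q^{(4)}$, $Q^{(5,1)}$, $Q^{(6)}$ acting on $\lu$ (note $V_{\lattice}^{+}\hookrightarrow\module$ gives us these operator identities on all of $\module$, in particular on $\lu$), to deduce successively that $\omega_{m+1}\lu=0$ for all $m$ beyond the claimed bound.

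First I would treat case (1). Here $\wideomega_{1}\lu=\wideHar_{3}\lu=\lu$ and $\Har_{4}\lu=0$, and by hypothesis $\wideomega_{2+i}\lu=\wideHar_{4+i}\lu=0$ for $i>0$ while $\wideHar_{4}\lu,\wideomega_{2}\lu\in\C\vac\subset V_{\lattice}^{+}\subset\module$. The first task is to show $\omega_{2}\lu=0$, which upgrades ``$\epsilon_{\module}(\omega,\lu)\le 1$'' to ``$=1$'': one evaluates the bracket relations \eqref{eq;[H^4(0),omega(m)]-1}, \eqref{eq;[H^4(0),omega(m)]-31} (with suitable $m$) on $\lu$ — since $\Harfour_{3}\lu=\lu$, these become linear relations among $\omega_{m+1}\lu$, $\Harfour_{m+3}\lu$ and scalars, which can be solved. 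To control $\epsilon_{\module}(\ExB(\alpha),\lu)$ I would use \eqref{eq:epsilonVlambda+lattice(E(alpha)-1}/\eqref{eq:epsilonVlambda+lattice(E(alpha)-2} as a benchmark and the vanishing of $Q^{(4)},Q^{(5,1)},Q^{(6)}$ applied to $\lu$: since each $Q$ is a known element of $V_{\lattice}^{+}$ that equals zero, the corresponding operator kills $\lu$, and because $\langle\alpha,\alpha\rangle=\mn\notin\{0,2\}$ the leading polynomial coefficients (e.g.\ $-48\mn-9$, $8 \mn-9$, $4\mn-1$, $2\mn-1$) are nonzero, so one can solve for the top-degree terms $\omega_{0}^{k}\ExB(\alpha)\cdot(\text{stuff})\,\lu$ and conclude $\epsilon_{\module}(\ExB(\alpha),\lu)\le 0$; combined with the standing hypothesis $\epsilon_{\module}(\ExB(\alpha),\lu)\ge 0$ this forces equality.

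Case (2) is analogous but with $\wideomega_{1}\lu=\wideHar_{3}\lu=0$ and the twisted normalization $\Har_{4}\lu=(-1/3)\omega_{2}\lu$; here $\Harfour_{3}$ and $\Harsix_{5}$ annihilate $\lu$, so \eqref{eq;[H^4(0),omega(m)]-4} and its higher analogues collapse to give $\omega_{m+1}\lu=0$ for $m\ne 0$ directly — after checking that the $\Harfour$ and $\Harsix$ contributions also vanish, which is where the relation $\Har_{4}\lu=(-1/3)\omega_{2}\lu$ and the $\sv^{(8),H}$ identity (applied to $\lu$ and to $\omega_{-1}\lu$-type shifts) enter. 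This gives $\epsilon_{\module}(\omega,\lu)\le 0$; then the same $Q^{(4)},Q^{(5,1)},Q^{(6)}$ computation, now with the relaxed bound $\epsilon_{\module}(\ExB(\alpha),\lu)\ge -1$, pins $\epsilon_{\module}(\ExB(\alpha),\lu)=-1$.

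The main obstacle is purely computational bookkeeping: one must apply the lengthy relations $\sv^{(8),H}$, $Q^{(4)}$, $Q^{(5,1)}$, $Q^{(6)}$ — and the iterated commutators \eqref{eq;[H^4(0),omega(m)]-1}–\eqref{eq;[H^4(0),omega(m)]-31} — to $\lu$ and to its images under low modes, keeping careful track of which products of modes survive given the vanishing hypotheses, and then check that the resulting linear system has the nonzero leading coefficients that let one solve for the top terms. This is exactly the kind of step the paper delegates to Risa/Asir, so in the write-up I would state the reductions, exhibit the key linear relations that result, and verify that the relevant polynomial coefficients in $\mn$ are nonzero under the hypothesis $\mn\notin\{0,2\}$, leaving the expansion of the brackets to the cited symbolic computation.
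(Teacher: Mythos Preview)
Your plan has the two main steps in the wrong order, and the first step as you describe it does not go through.

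In case (1) you propose to establish $\omega_{2}\lu=0$ first, using only the commutator identities \eqref{eq;[H^4(0),omega(m)]-1}--\eqref{eq;[H^4(0),omega(m)]-4}. But those identities relate only positive modes of $\omega$, $\Harfour$, $\Harsix$, and every such mode annihilates $\vac$. Since the hypothesis already says $\omega_{2}\lu\in\C\vac$, any relation you derive this way is satisfied automatically regardless of the scalar. For instance, \eqref{eq;[H^4(0),omega(m)]-1} with $m=1$ and $\Harfour_{3}\lu=\lu$, $\Harfour_{4}\lu=0$ collapses to $\Harfour_{3}\omega_{2}\lu=0$, which is vacuous. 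To detect the scalar you must bring in a mode that does \emph{not} kill $\vac$, and that is precisely $\ExB(\alpha)_{-1}$, since $\ExB(\alpha)_{-1}\vac=\ExB(\alpha)\neq 0$. (The same objection applies to your case (2): you assume $\Harsix_{5}\lu=0$, which is not among the hypotheses, so \eqref{eq;[H^4(0),omega(m)]-4} does not collapse as you claim.)

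The paper therefore reverses your order. It expands $Q^{(4)}_{\lE+4}\lu$, $Q^{(5,1)}_{\lE+5}\lu$, $Q^{(6)}_{\lE+6}\lu$ in a normal form (via \cite[Lemma 2.2]{Tanabe2019}) to obtain three relations each linear in the two unknowns $\ExB_{\lE}\lu$ and $\ExB_{\lE-1}\omega_{2}\lu$, with coefficients polynomial in $\lE$ and $\mn$. Eliminating $\ExB_{\lE-1}\omega_{2}\lu$ and then taking successive resultants in $\lE$ produces a single relation $\lE^{2}\cdot \mn(\mn-2)(2\mn-1)(8\mn-9)\,g_{1}g_{2}g_{3}g_{4}g_{5}\cdot \ExB_{\lE}\lu=0$, where the $g_{i}$ are explicit integer polynomials in $\mn$ with no integer roots. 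This forces $\lE=\epsilon_{\module}(\ExB(\alpha),\lu)=0$. Only \emph{then} does substitution of $\lE=0$ back into the first relation yield $\ExB(\alpha)_{-1}\omega_{2}\lu=0$, and since $\omega_{2}\lu\in\C\vac$ and $\ExB(\alpha)_{-1}\vac\neq 0$, this gives $\omega_{2}\lu=0$. Case (2) is handled by the same elimination with the other set of eigenvalues. Your sketch of the $Q$-step (``leading coefficients nonzero, solve for top-degree terms'') is too coarse: the mechanism is a two-variable elimination, not a single leading-term argument, and the nontrivial content is that the final scalar factor has no integer zeros in $\mn$ outside $\{0,2\}$.
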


\begin{proof}
We write $\mn=\langle\alpha,\alpha\rangle$ and $\ExB=\ExB(\alpha)$ for simplicity.
Let $\lE$ be an integer such that $\lE\geq \epsilon_{\module}(\ExB(\alpha),\lu)$.
Using \cite[Lemma 2.2]{Tanabe2019}, we expand $Q^{(4)}_{\lE+4}\lu,Q^{(5,1)}_{\lE+5}\lu$, and $Q^{(6)}_{\lE+6}\lu$ 
so that the resulting expressions are linear combinations of elements of the form 
\begin{align}
	a^{(1)}_{i_1}\cdots a^{(l)}_{i_l}\ExB_{m}b^{(1)}_{j_1}\cdots b^{(n)}_{j_n}\lu
\end{align}
where $l,n\in\Z_{\geq 0}$, $m\in\Z$, and 
\begin{align}
	(a^{(1)},i_1),\ldots,(a^{(l)},i_l)&\in\{(\wideomega,k)\ |\ k\leq 0\}\cup\{(\wideHar,k)\ |\ k\leq 2\},\nonumber\\
	(b^{(1)},j_1),\ldots,(b^{(n)},j_n)&\in\{(\wideomega,k)\ |\ k\geq 1\}\cup\{(\wideHar,k)\ |\ k\geq 3\}.
\end{align}
Then, we have the following results:
\begin{align}
\label{eq:-(lE+1)2((16mn+3)lE2-1}
0&=
-(\lE+1)^2 ((16 \mn+3) \lE^2+(-16 \mn^2+36 \mn+12) \lE\nonumber\\&\qquad{}+4 \mn^3-18 \mn^2+14 \mn+12)\ExB_{\lE } \lu
\nonumber\\&\quad{}
+2 ((20 \mn^2-8 \mn+6) \lE^2+(-16 \mn^3+44 \mn^2-10 \mn+12) \lE\nonumber\\&\qquad{}+4 \mn^4-18 \mn^3+18 \mn^2+\mn+6)\ExB_{\lE } \wideomega_{1 } \lu
\nonumber\\&\quad{}
+2 ((20 \mn^2-8 \mn+6) \lE^2+(-28 \mn^3+28 \mn^2+25 \mn-3) \lE\nonumber\\&\qquad{}+20 \mn^4-70 \mn^3+42 \mn^2+6 \mn)\ExB_{\lE-1 } \wideomega_{2 } \lu
\nonumber\\&\quad{}
-4 \mn (\mn-2) (4 \mn-3)\ExB_{\lE } \wideomega_{1 } \wideomega_{1 } \lu
\nonumber\\&\quad{}
-8 \mn (\mn-2) (4 \mn-3)\ExB_{\lE-1 } \wideomega_{1 } \wideomega_{2 } \lu
\nonumber\\&\quad{}
-2 \mn (\mn-2) (2 \mn-9) (2 \mn-1)\ExB_{\lE } \wideHar_{3 } \lu
\nonumber\\&\quad{}
-2 \mn (\mn-2) (2 \mn-9) (2 \mn-1)\ExB_{\lE-1 } \wideHar_{4 } \lu,\\
\label{eq:-(lE+1)2((16mn+3)lE2-2}0&=
2 (\lE+1)^2 ((8 \mn-9) \lE^3+(72 \mn^2+44 \mn-45) \lE^2\nonumber\\
&\qquad{}+(-78 \mn^3+166 \mn^2+115 \mn-78) \lE+20 \mn^4-88 \mn^3+52 \mn^2+112 \mn-48)\ExB_{\lE } \lu
\nonumber\\&\quad{}
-4 ((102 \mn^3-64 \mn^2+43 \mn-12) \lE^2+(-78 \mn^4+226 \mn^3-77 \mn^2+71 \mn-24) \lE\nonumber\\
&\qquad{}+20 \mn^5-88 \mn^4+82 \mn^3+20 \mn^2+22 \mn-12)\ExB_{\lE } \wideomega_{1 } \lu
\nonumber\\&\quad{}
-2 ((202 \mn^3-96 \mn^2+57 \mn-36) \lE^2+(-240 \mn^4+134 \mn^3+560 \mn^2-295 \mn+12) \lE\nonumber\\
&\qquad{}+220 \mn^5-818 \mn^4+558 \mn^3+117 \mn^2-102 \mn)\ExB_{\lE-1 } \wideomega_{2 } \lu
\nonumber\\&\quad{}
-8 \mn ((8 \mn^2-9 \mn) \lE-30 \mn^3+92 \mn^2-70 \mn+12)\ExB_{\lE } \wideomega_{1 } \wideomega_{1 } \lu
\nonumber\\&\quad{}
-4 \mn ((32 \mn^2-36 \mn) \lE-150 \mn^3+420 \mn^2-305 \mn+60)\ExB_{\lE-1 } \wideomega_{1 } \wideomega_{2 } \lu
\nonumber\\&\quad{}
+4 (\mn-2) (2 \mn-1) ((6 \mn^2-5 \mn+6) \lE+10 \mn^3-54 \mn^2+10 \mn+6)\ExB_{\lE }\wideHar_{3 } \lu
\nonumber\\&\quad{}
+(\mn-2) (2 \mn-1) ((24 \mn^2-20 \mn+24) \lE+50 \mn^3-300 \mn^2+75 \mn)\ExB_{\lE-1 }\wideHar_{4 } \lu,
\end{align}
\begin{align}
\label{eq:-(lE+1)2((16mn+3)lE2-3}
0&=
-30 (\lE+1)^2 ((12 \mn^2-27 \mn+6) \lE^4+(228 \mn^2-513 \mn+114) \lE^3
\nonumber\\&\qquad{}
+(704 \mn^5-569 \mn^4+994 \mn^3+335 \mn^2-2316 \mn+501) \lE^2
\nonumber\\&\qquad{}+(-704 \mn^6+2246 \mn^5-2098 \mn^4+2602 \mn^3+1314 \mn^2-4422 \mn+864) \lE
\nonumber\\&\qquad{}+176 \mn^7-932 \mn^6+1496 \mn^5-1175 \mn^4+410 \mn^3+2656 \mn^2-3198 \mn+540)\ExB_{\lE } \lu
\nonumber\\&\quad{}
-60 ((176 \mn^5-455 \mn^4+514 \mn^3-349 \mn^2+276 \mn-45) \lE^3
\nonumber\\&\qquad{}
+(-1056 \mn^6+2355 \mn^5-4527 \mn^4+4854 \mn^3-2877 \mn^2+1791 \mn-270) \lE^2
\nonumber\\&\qquad{}
+(704 \mn^7-2950 \mn^6+5142 \mn^5-8303 \mn^4+6886 \mn^3-3486 \mn^2+2574 \mn-405) \lE
\nonumber\\&\qquad{}
-176 \mn^8+932 \mn^7-1848 \mn^6+2609 \mn^5-3033 \mn^4+1085 \mn^3
\nonumber\\&\qquad\quad{}-352 \mn^2+990 \mn-180)\ExB_{\lE } \omega_{1 } \lu
\nonumber\\&\quad{}
-6 ((2112 \mn^5-5649 \mn^4+5826 \mn^3-4179 \mn^2+3564 \mn-675) \lE^3
\nonumber\\&\qquad{}
+(-11792 \mn^6+21956 \mn^5-34675 \mn^4+31030 \mn^3-11613 \mn^2+10944 \mn-2025) \lE^2
\nonumber\\&\qquad{}
+(10384 \mn^7-18258 \mn^6+8401 \mn^5+8859 \mn^4-76550 \mn^3+86952 \mn^2-19887 \mn+900) \lE
\nonumber\\&\qquad{}-10560 \mn^8+50640 \mn^7-91020 \mn^6+126690 \mn^5-114150 \mn^4
\nonumber\\&\qquad\quad
+22110 \mn^3+21600 \mn^2-4770 \mn)\ExB_{\lE-1 } \omega_{2 } \lu
\nonumber\\&\quad{}
+120 \mn ((176 \mn^5-455 \mn^4+694 \mn^3-754 \mn^2+366 \mn-45) \lE
\nonumber\\&\qquad{}
-352 \mn^6+1434 \mn^5-2623 \mn^4+3729 \mn^3-3523 \mn^2+1524 \mn-180)\ExB_{\lE } \omega_{1 } \omega_{1 } \lu
\nonumber\\&\quad{}
+12 \mn ((3872 \mn^5-10559 \mn^4+16276 \mn^3-17974 \mn^2+8574 \mn-1125) \lE
\nonumber\\&\qquad{}
-10032 \mn^6+36036 \mn^5-61980 \mn^4+88110 \mn^3-81438 \mn^2+34164 \mn-4050)\ExB_{\lE-1 } \omega_{1 } \omega_{2 } \lu
\nonumber\\&\quad{}
+720 \mn^3 (\mn-2) (4 \mn-1)\ExB_{\lE } \omega_{1 } \omega_{1 } \omega_{1 } \lu
\nonumber\\&\quad{}
+2160 \mn^3 (\mn-2) (4 \mn-1)\ExB_{\lE-1 } \omega_{1 } \omega_{1 } \omega_{2 } \lu
\nonumber\\&\quad{}
-6 (\mn-2) (2 \mn-1) ((880 \mn^5-190 \mn^4+800 \mn^3-1580 \mn^2+1800 \mn-450) \lE
\nonumber\\&\qquad{}
+880 \mn^6-6596 \mn^5+9372 \mn^4-14533 \mn^3+11462 \mn^2-72 \mn-450)\ExB_{\lE } \Har_{3 } \lu
\nonumber\\&\quad{}
-24 \mn (\mn-2) (2 \mn-1) (44 \mn^4-98 \mn^3+157 \mn^2-88 \mn+48)\ExB_{\lE } \Har_{3 } \omega_{1 } \lu
\nonumber\\&\quad{}
-24 \mn (\mn-2) (2 \mn-1) (44 \mn^4-98 \mn^3+157 \mn^2-88 \mn+48)\ExB_{\lE-1 } \Har_{3 } \omega_{2 } \lu
\nonumber\\&\quad{}
-3 (\mn-2) (2 \mn-1) ((2112 \mn^5-279 \mn^4+1686 \mn^3-3774 \mn^2+4404 \mn-1125) \lE
\nonumber\\&\qquad{}
+2640 \mn^6-22780 \mn^5+29470 \mn^4-47255 \mn^3+39830 \mn^2-6000 \mn)\ExB_{\lE-1 } \Har_{4 } \lu
\nonumber\\&\quad{}
-24 \mn (\mn-2) (2 \mn-1) (44 \mn^4-98 \mn^3+157 \mn^2-88 \mn+48)\ExB_{\lE-1 } \Har_{4 } \omega_{1 } \lu.
\end{align}
We note that $\wideomega_{i}\wideomega_{2}\lu=\wideomega_{i}\wideHar_{4}\lu=\wideHar_{i}\wideomega_{2}\lu=\wideHar_{i}\wideHar_{4}\lu=\ExB_{i}\wideomega_{2}\lu= \ExB_{i}\wideHar_{4}\lu=0$ for all $i\geq 0$ in \eqref{eq:-(lE+1)2((16mn+3)lE2-1}--\eqref{eq:-(lE+1)2((16mn+3)lE2-3} since $\wideHar_{4}\lu, \wideomega_{2}\lu\in \C\vac$.
\begin{enumerate}
\item
Substituting $\Har_{3}\lu=\omega_{1}\lu=\lu$ and $\Har_{4}\lu=0$ into 
\eqref{eq:-(lE+1)2((16mn+3)lE2-1}--\eqref{eq:-(lE+1)2((16mn+3)lE2-3} 
and deleting the terms including $\ExB_{\lE-1}\omega_{2}\lu$ from the obtained relations, we have
the following results:
\begin{align}
\label{eq:lE2(10mn2-4mn+3)-1}
0&=\lE^2
(10 \mn^2-4 \mn+3) \nonumber\\&\quad{}\times ((32 \mn-36) \lE^5+(-80 \mn^2+281 \mn-198) \lE^4
\nonumber\\&\qquad{}
+(24 \mn^3-376 \mn^2+783 \mn-378) \lE^3
\nonumber\\&\qquad{}
+(-12 \mn^4+176 \mn^3-673 \mn^2+887 \mn-294) \lE^2
\nonumber\\&\qquad{}
+(24 \mn^5+16 \mn^4-370 \mn^3+660 \mn^2-367 \mn+78) \lE
\nonumber\\&\qquad{}
-8 \mn^6-40 \mn^5+498 \mn^4-1424 \mn^3+1697 \mn^2-864 \mn+156)\ExB_{\lE}\lu,\\
\label{eq:lE2(10mn2-4mn+3)-2}
0&=
\lE^2 ((2400 \mn^4-6360 \mn^3+4080 \mn^2-2100 \mn+360) \lE^6
\nonumber\\&\qquad{}
+(33792 \mn^6-87408 \mn^5+137589 \mn^4-189186 \mn^3+124737 \mn^2-41898 \mn+5355) \lE^5
\nonumber\\&\qquad{}
+(-81664 \mn^7+382200 \mn^6-726064 \mn^5+1055205 \mn^4-1153306 \mn^3
\nonumber\\&\quad\qquad{}
+669765 \mn^2-193386 \mn+22095) \lE^4
\nonumber\\&\qquad{}
+(25344 \mn^8-412844 \mn^7+1292964 \mn^6-2146889 \mn^5+
2965056 \mn^4-2780396 \mn^3
\nonumber\\&\quad\qquad{}
+1427091 \mn^2-369111 \mn+39375) \lE^3
\nonumber\\&\qquad{}
+(-9152 \mn^9+180504 \mn^8-874824 \mn^7+1952142 \mn^6-2951142 \mn^5\nonumber\\&\quad\qquad{}
+3684897 \mn^4-2957890 \mn^3+1327089 \mn^2-306300 \mn+30465) \lE^2
\nonumber\\&\qquad{}
+(20416 \mn^{10}+4648 \mn^9-377744 \mn^8+1165462 \mn^7-2048798 \mn^6+2665009 \mn^5
\nonumber\\&\quad\qquad{}
-2403075 \mn^4+1390194 \mn^3-493932 \mn^2+103029 \mn-9630) \lE
\nonumber\\&\qquad{}
-7040 \mn^{11}-36640 \mn^{10}+546080 \mn^9-2091064 \mn^8+4414496 \mn^7
\nonumber\\&\quad\qquad{}
-6655612 \mn^6+7729798 \mn^5-6378462 \mn^4+3419178 \mn^3-1123344 \mn^2
\nonumber\\&\quad\qquad{}
+210006 \mn-17820)\ExB_{\lE}\lu.
\end{align}
Deleting $\lE^i\ (i\geq 3)$ from \eqref{eq:lE2(10mn2-4mn+3)-1} and \eqref{eq:lE2(10mn2-4mn+3)-2} (cf. (1) in the proof of \textcolor{black}{\cite[Lemma 3.10]{Tanabe2019}}), we have 
\begin{align}
\label{eq:lE2mn2(mn-2)(2mn-1)2(8mn-9}
0&=
\lE^2\mn
(\mn-2)
(2 \mn-1)
(8 \mn-9)
g_1g_2g_3g_4g_5\ExB_{\lE}\lu
%\lE^2\mn^2
%(\mn-2)
%(2 \mn-1)^2
%(8 \mn-9)^2
%g_1^2g_2g_3^2g_4^2g_5\ExB_{\lE}\lu
\end{align}
where
\begin{align*}
g_1&=720896 \mn^8-6533120 \mn^7+12732160 \mn^6-11571376 \mn^5+8753247 \mn^4
\nonumber\\&\quad\qquad{}
-6117402 \mn^3+2934828 \mn^2-718146 \mn+40095,
\nonumber\\
g_2&=
30976 \mn^10-93632 \mn^9-274896 \mn^8+676496 \mn^7+70580 \mn^6
\nonumber\\&\quad\qquad{}
-1376964 \mn^5+1569114 \mn^4-766098 \mn^3+138753 \mn^2+14580 \mn-6561,
\nonumber\\
g_3&=32480690176 \mn^{17}-686708228096 \mn^{16}+4100113563648 \mn^{15}
\nonumber\\&\qquad{}
-9261356843008 \mn^{14}+4721613180928 \mn^{13}+27252555600512 \mn^{12}
\nonumber\\&\qquad{}
-89074476796752 \mn^{11}+153500461862476 \mn^{10}-188119215355208 \mn^9
\nonumber\\&\qquad{}
+182750873232189 \mn^8-146333976903441 \mn^7+95994434529360 \mn^6
\nonumber\\&\qquad{}
-50207935079160 \mn^5+20230310418021 \mn^4-6034639211379 \mn^3
\nonumber\\&\qquad{}
+1264660375698 \mn^2-169751870910 \mn+11394160650,
\nonumber\\
g_4&=
108295298266169344 \mn^{28}-1578632220535422976 \mn^{27}
\nonumber\\&\qquad{}
+9311723577440993280 \mn^{26}
-30020826118265765888 \mn^{25}
\nonumber\\&\qquad{}
+56480859583533809664 \mn^{24}-34201181968036986880 \mn^{23}
\nonumber\\&\qquad{}
-166134072751850102784 \mn^{22}+653988138655346800640 \mn^{21}
\nonumber\\&\qquad{}
-1269321655065859079168 \mn^{20}
+1420723402232025004160 \mn^{19}
\nonumber\\&\qquad{}
-291714635577902883936 \mn^{18}-2582683778801669449520 \mn^{17}
\nonumber\\&\qquad{}
+6770894754722207547920 \mn^{16}-10944187113221235636592 \mn^{15}
\nonumber\\&\qquad{}
+13555805386036866935018 \mn^{14}
-13737333902075174510823 \mn^{13}
\nonumber\\&\qquad{}
+11720517307272109891506 \mn^{12}-8535382247957070808665 \mn^{11}
\nonumber\\&\qquad{}
+5336352269983480520232 \mn^{10}-2867703024491554846995 \mn^9
\nonumber\\&\qquad{}
+1322914878412254530550 \mn^8-522431105033231973729 \mn^7
\nonumber\\&\qquad{}
+175773803489479604430 \mn^6-49918422180208562604 \mn^5
\nonumber\\&\qquad{}
+11741259740661392544 \mn^4-2205270148985139708 \mn^3
\nonumber\\&\qquad{}
+309462836760861120 \mn^2-28630525595816700 \mn
\nonumber\\&\qquad{}+1296455338665000
\end{align*}
\begin{align}
g_5&=
125419598061634584576 \mn^{38}-4058775327682313846784 \mn^{37}
\nonumber\\&\quad{}
+61364532083457777467392 \mn^{36}-582936719229084397731840 \mn^{35}
\nonumber\\&\quad{}
+3943379872911033271844864 \mn^{34}-20371782223780152285790208 \mn^{33}
\nonumber\\&\quad{}
+83996702697827014071894016 \mn^{32}-284517625089600742756231168 \mn^{31}
\nonumber\\&\quad{}
+805663391079536900830277632 \mn^{30}-1920640734900916387237049344 \mn^{29}
\nonumber\\&\quad{}
+3836954501857572677445864064 \mn^{28}-6271979192897114647393521856 \mn^{27}
\nonumber\\&\quad{}
+7760578552733394021465806368 \mn^{26}-4960446924702123953479190320 \mn^{25}
\nonumber\\&\quad{}-7465071488664393767860076264 \mn^{24}+35708801005589262962946399572 \mn^{23}
\nonumber\\&\quad{}-84222727411601389378170057962 \mn^{22}+152434039009084653430665673211 \mn^{21}
\nonumber\\&\quad{}-232437295292831894456601157082 \mn^{20}+309491421641544381910932790025 \mn^{19}
\nonumber\\&\quad{}-366054612786171099332876352444 \mn^{18}+388035250642103571764606350307 \mn^{17}
\nonumber\\&\quad{}-370337448344968087579269096586 \mn^{16}+318820688670520334196524737033 \mn^{15}
\nonumber\\&\quad{}-247602958017955809443453495754 \mn^{14}+173233782350652213882420588060 \mn^{13}
\nonumber\\&\quad{}-108885985952016051266803373628 \mn^{12}+61222945466874004618327931304 \mn^{11}
\nonumber\\&\quad{}-30611663342976087375389167464 \mn^{10}+13506540937664381785106978544 \mn^9
\nonumber\\&\quad{}-5208033211875215795239060032 \mn^8+1733872407863047422573508704 \mn^7
\nonumber\\&\quad{}-490817666374303669993349712 \mn^6+115795112960087840103771888 \mn^5
\nonumber\\&\quad{}-22152274459759464382843776 \mn^4+3301481575103941278100080 \mn^3
\nonumber\\&\quad{}-359562883864819986252000 \mn^2+25437319335136373184000 \mn
\nonumber\\&\quad{}-875923032140153280000.
\end{align}
Substituting $\lE=\epsilon_{\module}(\ExB(\alpha),\lu)$ into \eqref{eq:lE2mn2(mn-2)(2mn-1)2(8mn-9},
we have $\epsilon_{\module}(\ExB(\alpha),\lu)=0$ since $g_i\neq 0$ for all $\mn\in\Z$ and $i=1,\ldots,5$.
Substituting $\lE=0$ into \eqref{eq:-(lE+1)2((16mn+3)lE2-1}, we have
\begin{align}
0&=\mn(10\mn^3-35\mn^2+21\mn+3)\ExB(\alpha)_{-1}\omega_{2}\lu.
\end{align}
Since $\omega_{2}\lu\in \C\vac$, we have $\omega_{2}\lu=0$ and hence $\epsilon_{\module}(\omega,\lu)=1$.
\item 
The same argument as above shows that if $\wideomega_{1}\lu=\wideHar_{3}\lu=0$ and $\Har_{4}\lu=(-1/3)\omega_{2}\lu$, then $\epsilon_{\module}(\ExB(\alpha),\lu)=-1$.
Substituting $t=0$, $\Har_{3}\lu=\omega_{1}\lu=0$, and $\Har_{4}\lu=(-1/3)\omega_{2}\lu$ into 
\eqref{eq:-(lE+1)2((16mn+3)lE2-1}
and deleting the terms including $\ExB_{0}\lu$ from the obtained relation, we have
$\ExB(\alpha)_{-1}\omega_{2}\lu=0$. Since $\omega_{2}\lu\in\C \vac$, we have $\omega_{2}\lu=0$ and hence \textcolor{black}{$\epsilon_{\module}(\omega,\lu)\leq 0$}.
\end{enumerate}
\end{proof}

\section{Proof of Theorem \ref{theorem:main}}
\label{section:main-result}
In this section we shall show Theorem \ref{theorem:main}.
Let $\lattice$ be a non-degenerate even lattice of finite rank $\rankL$.
Since \cite[Lemma 7.3]{Tanabe2019} shows that 
every weak $V_{\lattice}^{+}$-module has an irreducible weak $V_{\lattice}^{+}$-module,
it is enough to show that $\Ext^{1}_{V_{\lattice}^{+}}(\module,\mW)=0$
for any pair of irreducible weak $V_{\lattice}^{+}$-modules $\module$ and $\mW$ (cf. the last half of the proof of \cite[Theorem 2.8]{Abe2005}).

\begin{lemma}
\label{lemma:generate-completely}
Let $\module$ be a weak $V_{\lattice}^{+}$-modules and
$\mK$ an $M(1)^{+}$-submodule of $\module$
which is isomorphic to $M(1)^{\pm }$, $M(1)(\theta)^{\pm}$, or $M(1,\lambda)$ for some 
$\lambda\in\lattice^{\perp}\setminus\lattice$.
Then, $V_{\lattice}^{+}\cdot \mK$ is a completely reducible weak $V_{\lattice}^{+}$-module.
\end{lemma}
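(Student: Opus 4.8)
The plan is to reduce to the rank-one case and then to the classification of irreducible weak $V_{\lattice}^{+}$-modules. First I would observe that the hypothesis on $\mK$ says precisely that $\mK$ is an irreducible $M(1)^{+}$-submodule of $\module$ that is \emph{not} isomorphic to any $M(1,\lambda)$ with $\lambda\in\lattice$; equivalently, writing $\lwt$-type generators, $\mK$ is generated by a vector $\lu$ on which the $\omega^{[i]}$'s and the Virasoro-type elements $\Har^{[i]}$ act in one of the ``non-lattice'' ways catalogued in \cite[Sections 4 and 5]{DN2001}. The point is that such an $\mK$ determines a module for the Zhu algebra $A(M(1)^{+})$ on which the ideals $A^{u}$ and $A^{t}$ act in a controlled way, and in particular the lowest space of $\mK$ carries an irreducible $A(M(1)^{+})$-module structure compatible with that of one of the $M(1)^{\pm}$, $M(1)(\theta)^{\pm}$.

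The main step is to control $V_{\lattice}^{+}\cdot\mK$ as an $M(1)^{+}$-module. Decompose $V_{\lattice}^{+}$ as an $M(1)^{+}$-module via \eqref{VL+module-decomposition-M1+}: it is $M(1)^{+}$ (or $M(1)^{-}$ in the relevant graded piece) plus a direct sum of copies of $M(1,\alpha)$, $\alpha\in S_{\lattice}$. Hence $V_{\lattice}^{+}\cdot\mK=\sum_{\alpha}M(1,\alpha)\cdot\mK+M(1)^{+}\cdot\mK$, where $M(1,\alpha)\cdot\mK$ means the span of all $a_{i}\lu$ with $a\in M(1,\alpha)\subset V_{\lattice}^{+}$ and $\lu\in\mK$. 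Each such piece is the image of an $M(1)^{+}$-intertwining operator of type $\binom{?}{M(1,\alpha)\ \mK}$, so by the fusion rules for $M(1)^{+}$-modules (from \cite{DN1999-1, DN2001}) the possible composition factors of $M(1,\alpha)\cdot\mK$ lie in an explicit finite list, each again of ``non-lattice'' type $M(1)^{\pm}$, $M(1)(\theta)^{\pm}$, or $M(1,\mu)$ with $\mu\notin\lattice$. Since the relevant irreducible $M(1)^{+}$-modules have lowest weights bounded below, and the $\epsilon$-bookkeeping of \eqref{eqn:max-vanish} controls where images of intertwining operators start, one gets that $V_{\lattice}^{+}\cdot\mK$ is a sum of finitely many irreducible $M(1)^{+}$-modules up to each fixed $\omega_{1}$-eigenvalue; the key assertion to nail down is that no extension among these $M(1)^{+}$-modules can survive, which follows because $M(1)^{+}$ is rational (every $\N$-graded weak $M(1)^{+}$-module is completely reducible, \cite[Theorems 3.8 and 3.16]{Yamsk2009}) together with the fact that, by the non-lattice hypothesis, $V_{\lattice}^{+}\cdot\mK$ is $\N$-graded: indeed $\mK$ being of non-lattice type forces $\lu$ to sit inside $\Omega_{M(1)^{+}}$, the operators $\omega_{1}$ act semisimply with spectrum in a single coset of $\Z$ bounded below, and every weight space is finite-dimensional.

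Granting that $V_{\lattice}^{+}\cdot\mK$ is completely reducible as an $M(1)^{+}$-module, I would finish by upgrading this to complete reducibility as a $V_{\lattice}^{+}$-module: $V_{\lattice}^{+}\cdot\mK$ is generated over $V_{\lattice}^{+}$ by the irreducible $M(1)^{+}$-module $\mK$, and by \cite[Corollary 5.9]{Tanabe2019} (and the classification \cite[Theorem 1.1]{Tanabe2019}) the $V_{\lattice}^{+}$-submodule generated by such a $\mK$ is one of the listed irreducible weak $V_{\lattice}^{+}$-modules, or a sum of such; since $V_{\lattice}^{+}\cdot\mK$ is the union of the submodules generated by each irreducible $M(1)^{+}$-constituent that maps from $\mK$, it is a sum of irreducible weak $V_{\lattice}^{+}$-modules, hence completely reducible. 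The main obstacle I expect is the second step: proving that $V_{\lattice}^{+}\cdot\mK$ is $\N$-graded with finite-dimensional weight spaces and no surviving $M(1)^{+}$-extensions. This is where one must exploit that $\mK$ is of non-lattice type (so lowest weights are bounded below even when $\lattice$ is indefinite, unlike the $M(1,\lambda)$ with $\lambda\in\lattice$ where \eqref{eq:dimC(Vlambda+lattice)n=+infty} breaks finiteness), and carefully track $\epsilon$-values via the relations for $Q^{(4)}, Q^{(5,1)}, Q^{(6)}$ and the commutators \eqref{eq;[H^4(0),omega(m)]-1}--\eqref{eq;[H^4(0),omega(m)]-31}, reducing to $\rank\lattice=1$ by restricting to each $\C h^{[i]}$.
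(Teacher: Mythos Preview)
Your proposal has a genuine gap. The crux of your middle step is the claim that $V_{L}^{+}\cdot K$ is an $\N$-graded weak $M(1)^{+}$-module with $\omega_{1}$-spectrum bounded below and finite-dimensional weight spaces, so that rationality of $M(1)^{+}$ from \cite{Yamsk2009} applies. But this is false in most of the cases at hand when $L$ is indefinite. If $K\cong M(1)^{+}$ then $V_{L}^{+}\cdot K=V_{L}^{+}$; if $K\cong M(1,\lambda)$ with $\lambda\in L^{\perp}\setminus L$ then $V_{L}^{+}\cdot K$ turns out to be $V_{\lambda+L}$ or $V_{\lambda+L}^{\pm}$. By \eqref{eq:dimC(Vlambda+lattice)n=+infty} these have infinite-dimensional weight spaces, and for indefinite $L$ the set $\{\langle\alpha,\alpha\rangle/2:\alpha\in S_{L}\}$ is unbounded below, so the $\omega_{1}$-spectrum of $V_{L}^{+}\cdot K$ is not bounded below either. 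Your remark that ``$K$ being of non-lattice type forces lowest weights to be bounded below'' conflates the weights of $K$ with those of $V_{L}^{+}\cdot K$: the former are bounded below, the latter are not. The relations $Q^{(4)},Q^{(5,1)},Q^{(6)}$ and the rank-one reduction you invoke do not repair this; those are devices for locating individual vectors in $\Omega_{M(1)^{+}}$, not for producing a global $\N$-grading on a weak module. Consequently your appeal to \cite[Theorems 3.8 and 3.16]{Yamsk2009} is unjustified outside the twisted case.

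The paper's proof is far more direct: it identifies $V_{L}^{+}\cdot K$ explicitly in each case by citing existing results, with no fusion-rule or gradedness analysis. For $K\cong M(1)(\theta)^{\pm}$, \cite[Lemma 7.2(1)]{Tanabe2019} shows $V_{L}^{+}\cdot K$ is an honest $V_{L}^{+}$-module, so \cite[Theorem 3.16]{Yamsk2009} gives complete reducibility (this is the one case where your strategy is on target). For $K\cong M(1)^{+}$, \cite[Proposition 4.7.7]{LL} gives $V_{L}^{+}\cdot K\cong V_{L}^{+}$; for $K\cong M(1)^{-}$, \cite[Lemma 7.1]{Tanabe2019} gives $V_{L}^{+}\cdot K\cong V_{L}^{-}$; and for $K\cong M(1,\lambda)$ with $\lambda\in L^{\perp}\setminus L$, the proof of \cite[Lemma 7.3]{Tanabe2019} identifies $V_{L}^{+}\cdot K$ as $V_{\lambda+L}$, $V_{\lambda+L}^{\pm}$, or $V_{\lambda+L}^{+}\oplus V_{\lambda+L}^{-}$. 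In every case $V_{L}^{+}\cdot K$ is already a known irreducible or a direct sum of two irreducibles, so complete reducibility is immediate.
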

\begin{proof}
If $\mK\cong M(1)(\theta)^{\pm}$, then  $V_{\lattice}^{+}\cdot K$ is a $V_{\lattice}^{+}$-module
by \cite[Lemma 7.2 (1)]{Tanabe2019} and hence is completely reducible by \cite[Theorem 3.16]{Yamsk2009}.
If $\mK\cong M(1)^{+}$, then $V_{\lattice}^{+}\cdot K\cong V_{\lattice}^{+}$ as $V_{\lattice}^{+}$-modules
by \cite[Proposition 4.7.7]{LL}.
If $\mK\cong M(1)^{-}$, then $V_{\lattice}^{-}\cdot K\cong V_{\lattice}^{-}$ as $V_{\lattice}^{+}$-modules
by \cite[Lemma 7.1]{Tanabe2019}.
If $\mK\cong M(1,\lambda)$ for some $\lambda\in \lattice^{\perp}\setminus\lattice$,
then the proof of \cite[Lemma 7.3]{Tanabe2019} shows that 
$V_{\lattice}^{+}\cdot K\cong V_{\lambda+\lattice}$ if $2\lambda\not\in \lattice$
and 
$V_{\lattice}^{+}\cdot K\cong V_{\lambda+\lattice}^{\pm}$ or $V_{\lambda+\lattice}^{+}\oplus
V_{\lambda+\lattice}^{-}$ if $2\lambda\in \lattice$. Thus, $V_{\lattice}^{+}\cdot K$ is a completely reducible weak $V_{\lattice}^{+}$-module
in each case. The proof is complete.
\end{proof}

\begin{lemma}
\label{lemma:extm1vl-standard}
Let $\module$ and $\mW$ be two irreducible weak $V_{\lattice}^{+}$-modules.
We set a $M(1)^{+}$-submodule $\mK$ of $\module$ by
\begin{align}
\mK=\left\{
\begin{array}{ll}
M(1)^{+},&\mbox{if }\module\cong V_{\lattice}^{+},\\
M(1)^{-},&\mbox{if }\module\cong V_{\lattice}^{-},\\
\mbox{any irreducible $M(1)^{+}$-submodule of $\module$},&\mbox{otherwise}.
\end{array}
\right.
\end{align}
If $\Ext^{1}_{M(1)^{+}}(\mK,U)=0$ for any $M(1)^{+}$-submodule $U$ of $\mW$,
then $\Ext^{1}_{V_{\lattice}^{+}}(\module,\mW)=0$.
\end{lemma}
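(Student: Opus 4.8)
The plan is to reduce the splitting of an arbitrary extension of $\module$ by $\mW$ over $V_{\lattice}^{+}$ to a splitting over the subalgebra $M(1)^{+}$, and then to use Lemma~\ref{lemma:generate-completely} to upgrade the resulting $M(1)^{+}$-submodule to a $V_{\lattice}^{+}$-submodule that splits the original sequence.

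So let $0\rightarrow\mW\overset{\iota}{\rightarrow}\mN\overset{\pi}{\rightarrow}\module\rightarrow 0$ be an exact sequence of weak $V_{\lattice}^{+}$-modules; it suffices to construct a section of $\pi$. First I would regard this as a sequence of $M(1)^{+}$-modules and form the $M(1)^{+}$-submodule $\pi^{-1}(\mK)$ of $\mN$. Since $\iota(\mW)=\Ker\pi\subset\pi^{-1}(\mK)$ and $\pi$ carries $\pi^{-1}(\mK)$ onto $\mK$, there results an exact sequence of $M(1)^{+}$-modules $0\rightarrow\mW\rightarrow\pi^{-1}(\mK)\rightarrow\mK\rightarrow 0$. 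Applying the hypothesis with $U=\mW$ gives $\Ext^{1}_{M(1)^{+}}(\mK,\mW)=0$, so this sequence splits; let $\mK'\subset\mN$ be the image of a splitting map, an $M(1)^{+}$-submodule on which $\pi$ restricts to an isomorphism $\mK'\to\mK$.

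Next I would verify that in each of the three cases $\mK$, and hence $\mK'$, is isomorphic to one of $M(1)^{\pm}$, $M(1)(\theta)^{\pm}$, or $M(1,\mu)$ with $\mu\in\lattice^{\perp}\setminus\lattice$. When $\module\cong V_{\lattice}^{\pm}$ this is immediate, and in the remaining cases it follows from \eqref{VL+module-decomposition-M1+}: each such irreducible $\module$ decomposes, as an $M(1)^{+}$-module, into a direct sum of irreducibles, each of which is $M(1,\mu)$ for some $\mu\in\lattice^{\perp}\setminus\lattice$, or $M(1)(\theta)^{+}$, or $M(1)(\theta)^{-}$; and an irreducible $M(1)^{+}$-submodule of $\module$ must be isomorphic to one of these summands. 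Hence Lemma~\ref{lemma:generate-completely} applies to $\mK'\subset\mN$, so that $V_{\lattice}^{+}\cdot\mK'$ is a completely reducible weak $V_{\lattice}^{+}$-module. Applying $\pi$ gives $\pi(V_{\lattice}^{+}\cdot\mK')=V_{\lattice}^{+}\cdot\pi(\mK')=V_{\lattice}^{+}\cdot\mK$, and this equals $\module$ because $\module$ is an irreducible weak $V_{\lattice}^{+}$-module (this is true of $V_{\lattice}^{\pm}$ as well) while $V_{\lattice}^{+}\cdot\mK\supseteq\mK\neq 0$.

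Finally, $\pi$ restricts to a surjective homomorphism $V_{\lattice}^{+}\cdot\mK'\rightarrow\module$ out of a completely reducible weak $V_{\lattice}^{+}$-module; choosing a $V_{\lattice}^{+}$-submodule complement $\module''$ of $(V_{\lattice}^{+}\cdot\mK')\cap\Ker\pi$ inside $V_{\lattice}^{+}\cdot\mK'$, the restriction $\pi|_{\module''}\colon\module''\to\module$ is an isomorphism, and composing its inverse with the inclusion $\module''\subset\mN$ yields a section of $\pi$. Thus every such sequence splits and $\Ext^{1}_{V_{\lattice}^{+}}(\module,\mW)=0$. In this argument all the real content is carried by Lemma~\ref{lemma:generate-completely} (and, behind it, the complete reducibility of $\N$-graded weak $V_{\lattice}^{+}$-modules and the classification of the irreducible $M(1)^{+}$- and $V_{\lattice}^{+}$-modules); the rest is a routine diagram chase. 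The one step that needs care is checking that $V_{\lattice}^{+}\cdot\mK'$ surjects onto all of $\module$ rather than merely onto $\mK$ --- this is precisely where the irreducibility of $\module$ (and of $V_{\lattice}^{\pm}$ in the two special cases) enters.
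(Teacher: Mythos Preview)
Your proof is correct and follows essentially the same approach as the paper: lift $\mK$ to an $M(1)^{+}$-submodule $\mK'$ of $\mN$ using the $\Ext^{1}$ hypothesis, apply Lemma~\ref{lemma:generate-completely} to see that $V_{\lattice}^{+}\cdot\mK'$ is completely reducible, and split off a copy of $\module$. The paper's version is terser (it cites \cite[Lemma~2.6]{Abe2005} together with the decomposition of $\mW$ into irreducible $M(1)^{+}$-modules rather than simply taking $U=\mW$, and it leaves implicit the check that $\mK$ has the form required by Lemma~\ref{lemma:generate-completely}), but the underlying argument is the same.
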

\begin{proof}
Let $0\rightarrow W\rightarrow N\overset{\pi}{\rightarrow} M\rightarrow 0$ be an exact sequence 
of weak $V_{\lattice}^{+}$-modules.
Since any weak $V_{\lattice}^{+}$-module is a direct sum of 
irreducible $M(1)^{+}$-modules, 
it follows from \cite[Lemma 2.6]{Abe2005} that $\Ext^{1}_{M(1)^{+}}(\mK,\mW)=0$
and hence we can take an $M(1)^{+}$-submodule of $\mN$
which is isomorphic to $K$ and intersects with $\mW$ trivially.
We denote again  by $\mK$ this $M(1)^{+}$-submodule of $\mN$.
Since $K\cap \mW=0$ and $\module$ is an irreducible weak $V_{\lattice}^{+}$-module, we have $\mN=\mW+(V_{\lattice}^{+}\cdot K)$.
By Lemma \ref{lemma:generate-completely}, $V_{\lattice}^{+}\cdot \mK$ is a completely reducible weak $V_{\lattice}^{+}$-module
and hence
$\mN\cong \module\oplus \mW$ as $V_{\lattice}^{+}$-modules.
\end{proof}

\begin{corollary}
\label{corollary:many-ext=0}
If a pair $(M,W)$ of irreducible $V_{\lattice}^{+}$-modules satisfies one of the following conditions,
then $\Ext^{1}_{V_{\lattice}^{+}}(M,W)=0$.
\begin{enumerate}
\item $M\cong V_{\lattice}^{+}$ and $W\not\cong V_{\lattice}^{-}$.
\item $M\cong V_{\lattice}^{-}$ and $W\not\cong V_{\lattice}^{+}$.
\item $M\cong V_{\lambda+\lattice}$ with $\lambda\in\lattice^{\perp}$ and $2\lambda\not\in\lattice$
 and $W\not\cong V_{\lambda+\lattice}$.
\item $M\cong V_{\lambda+\lattice}^{+}$ with $\lambda\in\lattice^{\perp}$ and $2\lambda\in\lattice$
 and $W\not\cong V_{\lambda+\lattice}^{\pm}$.
\item $M\cong V_{\lambda+\lattice}^{-}$ with $\lambda\in\lattice^{\perp}$ and $2\lambda\in\lattice$
 and $W\not\cong V_{\lambda+\lattice}^{\pm}$.
\item $M\cong V_{\lattice}^{T_{\chi},\pm}$ where $\chi$ is a central character for $\hat{\lattice}/\mK$
with $\chi(\kappa)=-1$.
\end{enumerate}
\end{corollary}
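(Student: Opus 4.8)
The plan is to apply Lemma~\ref{lemma:extm1vl-standard} in each of the six cases. Fix $(\module,\mW)$ satisfying one of (1)--(6) and let $K$ be the distinguished irreducible $M(1)^{+}$-submodule of $\module$ prescribed in Lemma~\ref{lemma:extm1vl-standard}: $K\cong M(1)^{+}$ in case (1), $K\cong M(1)^{-}$ in case (2), $K\cong M(1,\mu)$ for a suitable $\mu\in\lattice^{\perp}$ in cases (3)--(5), and $K\cong M(1)(\theta)^{\pm}$ in case (6). By Lemma~\ref{lemma:extm1vl-standard} it suffices to show $\Ext^{1}_{M(1)^{+}}(K,U)=0$ for every $M(1)^{+}$-submodule $U$ of $\mW$; and since $\mW$ is a direct sum of irreducible $M(1)^{+}$-modules by \eqref{VL+module-decomposition-M1+}, \cite[Lemma~2.6]{Abe2005} reduces this to the case in which $U$ is an irreducible $M(1)^{+}$-module, so that $U\cong M(1)^{\pm}$, $M(1,\nu)$ with $\nu\neq 0$, or $M(1)(\theta)^{\pm}$.

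I would then read off, from \eqref{VL+module-decomposition-M1+}, which isomorphism types of irreducible $M(1)^{+}$-modules occur in $\mW$ under each hypothesis, and check that the list never contains the type ``paired'' with $K$. When $K\cong M(1)^{\pm}$ the paired type $M(1)^{\mp}$ occurs only in $V_{\lattice}^{\mp}$, which is excluded in (1)/(2); when $K\cong M(1,\mu)$ the paired types $M(1,\pm\mu)$ occur, in view of $V_{-\lambda+\lattice}\cong V_{\lambda+\lattice}$ as $V_{\lattice}^{+}$-modules and of $-\lambda+\lattice=\lambda+\lattice$ when $2\lambda\in\lattice$, only in $V_{\lambda+\lattice}$ (case (3)) or $V_{\lambda+\lattice}^{\pm}$ (cases (4)/(5)), i.e. exactly in the excluded $\mW$; when $K\cong M(1)(\theta)^{\pm}$ no restriction on $\mW$ is needed, and one simply requires $\Ext^{1}_{M(1)^{+}}(M(1)(\theta)^{\pm},U)=0$ for all irreducible $M(1)^{+}$-modules $U$.

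The crux is therefore the vanishing of $\Ext^{1}_{M(1)^{+}}(K,U)$ for the surviving pairs $(K,U)$ of irreducible $M(1)^{+}$-modules, which I would obtain from the structural analysis of weak $M(1)^{+}$-modules carried out in \cite{Tanabe2019} for the classification of the irreducible weak $V_{\lattice}^{+}$-modules, together with the complete reducibility of $\N$-graded weak $V_{\lattice}^{+}$-modules from \cite{Yamsk2009} and, for case (6), the fact used in the proof of Lemma~\ref{lemma:generate-completely} that $V_{\lattice}^{+}\cdot K$ is an honest $V_{\lattice}^{+}$-module when $K$ is twisted. I expect the sub-case in which the lowest conformal weights of $K$ and $U$ differ by a non-integer to be routine, since any extension of weak $M(1)^{+}$-modules is then the direct sum of its generalized $L(0)$-eigenspaces and becomes $\N$-graded after a shift; the genuinely delicate pairs are those with lowest weights congruent modulo $\Z$ — non-isomorphic pairs $(M(1,\mu),M(1,\nu))$ in cases (3)--(5) and the pairs of twisted modules in case (6). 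Finally, the pairs for which $\Ext^{1}_{M(1)^{+}}$ does not vanish — essentially $(M(1)^{\pm},M(1)^{\mp})$ and the self-extensions of $M(1,\mu)$ — are exactly those excluded by the hypotheses, which is why the remaining pairs $(V_{\lattice}^{\pm},V_{\lattice}^{\mp})$, $(V_{\lambda+\lattice},V_{\lambda+\lattice})$ with $2\lambda\notin\lattice$, and $(V_{\lambda+\lattice}^{\rho},V_{\lambda+\lattice}^{\sigma})$ with $2\lambda\in\lattice$ must be settled separately in the lemmas that follow.
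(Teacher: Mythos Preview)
Your approach is essentially the paper's: it too derives the corollary directly from the decomposition \eqref{VL+module-decomposition-M1+}, Lemma~\ref{lemma:extm1vl-standard}, \cite[Proposition~5.8]{Tanabe2019}, and \cite[Theorem~3.16]{Yamsk2009}, with your case analysis making explicit why the excluded $\mW$'s are precisely those whose $M(1)^{+}$-constituents are ``paired'' with $K$. The only sharpening needed is to replace the vague ``structural analysis of weak $M(1)^{+}$-modules in \cite{Tanabe2019}'' by the specific citation \cite[Proposition~5.8]{Tanabe2019}, which supplies exactly the $\Ext^{1}_{M(1)^{+}}(K,U)=0$ statements you need for the surviving pairs.
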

\begin{proof}
The result follows from \eqref{VL+module-decomposition-M1+}, Lemma \ref{lemma:extm1vl-standard}, 
\cite[Proposition 5.8]{Tanabe2019}, and \cite[Theorem 3.16]{Yamsk2009}
\end{proof}
Let $h^{[1]},\ldots,h^{[\rankL]}$ be an orthonormal basis of $\fh$.
For a weak $V_{\lattice}^{+}$-module $\mN$ and $\zeta=(\zeta^{[i]})_{i=1}^{\rankL},\xi=(\xi^{[i]})_{i=1}^{\rankL}$, $\rho=(\rho^{[i]})_{i=1}^{\rankL}\in \C^{\rankL}$,
we define a generalized eigenspace
\begin{align}
N_{\zeta,\xi,\rho}&=\Big\{u\in\mN\ \Big|\ 
\begin{array}{l}
\mbox{For $i=1,\ldots,\rankL$ there exists $j\in\Z_{>0}$ such that}\\
(\omega^{[i]}_{1}-\zeta^{[i]})^j\lu=
(\Har^{[i]}_{3}-\xi^{[i]})^j\lu=
(\Har^{\langle 6\rangle, [i]}_{5}-\rho^{[i]})^j\lu=0
\end{array}\Big\}.
\end{align}

Let $0\rightarrow W\rightarrow N\overset{\pi}{\rightarrow} M\rightarrow 0$ be an exact sequence of weak $V_{\lattice}^{+}$-modules
where $\mW$ and $\module$ are irreducible weak $V_{\lattice}^{+}$-modules.
For all $\lu\in N_{\zeta,\xi,\rho}$ and $a,b\in 
\{\omega^{[i]}_{1}-\zeta^{[i]},\Har^{[i]}_{3}-\xi^{[i]},
\Har^{\langle 6\rangle, [i]}_{5}-\rho^{[i]}\ |\ i=1,\ldots,\rankL\}$, we have
\begin{align}
	\label{eq:aluinM}
a\lu&\in M \mbox{ and  }ab\lu=0
\end{align}
since an arbitrary irreducible weak $V_{\lattice}^{+}$-module is a completely reducible module for the commutative $\C$-algebra
generated by $\{
\omega^{[i]}_{1},\Har^{[i]}_{3},\Har^{\langle 6\rangle, [i]}_{5}\ |\ i=1,\ldots,\rankL\}$.
For $\alpha\in \fh$ with $\alpha\in \C h^{[1]}$ and
$\lu\in \mN_{\zeta,\xi,\rho}$, we have
\begin{align}
\label{eq:(omega[1]1-zeta[1]+fracmn}
&(\omega^{[1]}_{1}-\zeta^{[1]}-\frac{\langle\alpha,\alpha\rangle}{2}+\epsilon(\ExB(\alpha),\lu)+1)^2\ExB(\alpha)_{\epsilon(\ExB(\alpha),\lu)}\lu\nonumber\\
&=
\ExB(\alpha)_{\epsilon(\ExB(\alpha),\lu)}(\omega^{[1]}_{1}-\zeta^{[1]})^2\lu=0.
\end{align}

\begin{lemma}
\label{lemma:lattice-basis-change}
Let $\lattice$ be a non-degenerate integral lattice of finite rank $\rankL$.
and $\lambda$ a non-zero element of $\Q\otimes_{\Z}L$.
Then, there exists a sublattice $\oplus_{i=1}^{\rankL}\Z\alpha^{[i]}$ of $\lattice$ 
such that 
$\langle\alpha_i,\alpha_i\rangle\not\in\{0,2\}$ for all $i=1,\ldots,\rankL$,
$\langle\alpha_i,\alpha_j\rangle=0$ for all pairs of distinct elements $i,j\in\{1,\ldots,\rankL\}$,
and
$\langle\alpha_i,\lambda\rangle\neq 0$ for all $i=1,\ldots,\rankL$.
\end{lemma}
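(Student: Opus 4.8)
The plan is to build the desired orthogonal sublattice one vector at a time, choosing each new generator inside the orthogonal complement of the span of those already chosen, while avoiding at each stage the finitely many ``bad'' conditions $\langle\cdot,\cdot\rangle\in\{0,2\}$ and $\langle\cdot,\lambda\rangle=0$. Since $\lattice$ is non-degenerate of rank $\rankL$, for any proper subspace $U\subsetneq\Q\otimes_{\Z}\lattice$ the orthogonal complement $U^{\perp}$ (with respect to $\langle\ ,\ \rangle$) is non-zero; this is the basic fact I will use repeatedly.

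The construction proceeds by induction on $i$. Suppose $\alpha^{[1]},\ldots,\alpha^{[i-1]}$ have been chosen in $\lattice$, pairwise orthogonal, each with $\langle\alpha^{[j]},\alpha^{[j]}\rangle\notin\{0,2\}$ and $\langle\alpha^{[j]},\lambda\rangle\neq 0$. Let $U_{i-1}=\Span_{\Q}\{\alpha^{[1]},\ldots,\alpha^{[i-1]}\}$, a subspace of dimension $i-1<\rankL$, and let $P=U_{i-1}^{\perp}\cap(\Q\otimes_{\Z}\lattice)$, which is a non-zero $\Q$-subspace of dimension $\rankL-(i-1)\geq 1$ because $\langle\ ,\ \rangle$ restricted to $U_{i-1}$ is non-degenerate (the Gram matrix is diagonal with nonzero entries). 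First I would pick any $\beta\in P\cap\lattice$ with $\beta\neq 0$ such that $\langle\beta,\beta\rangle\neq 0$: this is possible since $\langle\ ,\ \rangle$ restricted to $P$ is again non-degenerate (as the orthogonal complement of a non-degenerate subspace in a non-degenerate space), so $P$ is not totally isotropic. Now I must perturb $\beta$ so that the two remaining conditions hold. The key observation is that the three ``forbidden'' loci are: $\langle x,x\rangle=0$, $\langle x,x\rangle=2$, and $\langle x,\lambda\rangle=0$, viewed as conditions on $x$ ranging over the infinite set $\{\beta+n\gamma\mid n\in\Z\}$ for a suitable auxiliary vector $\gamma$.

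The choice of $\gamma$ is the crux. If $\langle\beta,\lambda\rangle\neq 0$ already, take $\gamma$ to be (a suitable integer multiple of) the projection of $\lambda$ onto $P$, or more simply any $\gamma\in P\cap\lattice$; if the projection of $\lambda$ onto $P$ is zero — which cannot happen when $i=1$ since then $P=\Q\otimes_{\Z}\lattice\ni\lambda$, but could in principle happen for $i>1$ — I would instead argue that $\lambda\in U_{i-1}$, contradicting that $\langle\alpha^{[j]},\lambda\rangle\neq 0$ forces $\lambda\notin U_{i-1}^{\perp}$... Actually the cleanest route: let $\lambda'$ be the orthogonal projection of $\lambda$ onto $P$; since $\langle\alpha^{[j]},\lambda\rangle\neq 0$ for the previously chosen vectors and those span $U_{i-1}$, and since $\lambda=\lambda'+(\lambda-\lambda')$ with $\lambda-\lambda'\in U_{i-1}$, we have $\langle\beta,\lambda\rangle=\langle\beta,\lambda'\rangle$ for all $\beta\in P$; if $\lambda'=0$ then $\langle\beta,\lambda\rangle=0$ for every $\beta\in P$, and in particular the whole construction would fail — but I claim $\lambda'\neq 0$, for otherwise $\lambda\in U_{i-1}$, and then some $\alpha^{[j]}$ would satisfy $\langle\alpha^{[j]},\lambda\rangle=0$ is not forced... hmm, that argument needs the $\alpha^{[j]}$ to span a non-degenerate space containing $\lambda$, which they do, so if $\lambda\in U_{i-1}$ then writing $\lambda=\sum c_j\alpha^{[j]}$ gives $\langle\alpha^{[k]},\lambda\rangle=c_k\langle\alpha^{[k]},\alpha^{[k]}\rangle$, which is nonzero iff $c_k\neq 0$, so at least one $c_k\neq 0$ and this is consistent — meaning $\lambda\in U_{i-1}$ is \emph{not} immediately excluded. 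To handle this uniformly I would instead choose $\gamma\in P\cap\lattice$ generically: since $\langle\ ,\ \rangle|_P$ is non-degenerate there exists $\gamma\in P\cap\lattice$ with $\langle\beta,\gamma\rangle\neq 0$, and after possibly replacing $\gamma$ we may also arrange $\langle\gamma,\lambda\rangle\neq 0$ \emph{provided} $\lambda'\neq 0$; if $\lambda'=0$ we are in trouble, so I will need a separate lemma or hypothesis ensuring this does not occur. I expect this case analysis — guaranteeing that at every stage one can still ``see'' $\lambda$ from within the current orthogonal complement — to be the main obstacle. Most likely the resolution is that $\lambda'=0$ at stage $i$ would force $\lambda\perp P\supseteq$ (future generators), which is exactly what we must avoid, so one must order the construction or choose the $\alpha^{[j]}$ more carefully — e.g., first fixing one generator $\alpha^{[1]}$ with $\langle\alpha^{[1]},\lambda\rangle\neq 0$ and $\langle\alpha^{[1]},\alpha^{[1]}\rangle\notin\{0,2\}$ by the perturbation trick, and then for $i\geq 2$ ensuring each new $\alpha^{[i]}$ has a nonzero component along the projection of $\lambda$; the point $\lambda'\neq 0$ for $i\geq 2$ should follow since $\lambda$ has nonzero component perpendicular to $\alpha^{[1]}$ (as $\langle\alpha^{[1]},\lambda\rangle\neq 0$ means $\lambda\notin(\alpha^{[1]})^{\perp}$, hence its projection onto $(\alpha^{[1]})^{\perp}$... no, that's backwards). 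I would work this parity/projection bookkeeping out carefully in the actual proof.

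Granting the existence of $\gamma\in P\cap\lattice$ with $\langle\beta,\gamma\rangle\neq 0$ and, in the cases where it matters, $\langle\gamma,\lambda\rangle\neq 0$, I finish as follows. Consider the integer-parametrized family $x_n=N\beta+n\gamma\in P\cap\lattice$ for a fixed large integer $N$ to be chosen, $n\in\Z$. The function $n\mapsto\langle x_n,x_n\rangle=N^2\langle\beta,\beta\rangle+2Nn\langle\beta,\gamma\rangle+n^2\langle\gamma,\gamma\rangle$ is a non-constant quadratic in $n$ (its leading coefficient $\langle\gamma,\gamma\rangle$ could vanish, but then the linear coefficient $2N\langle\beta,\gamma\rangle\neq 0$ makes it non-constant and injective on $\Z$), hence takes the values $0$ and $2$ for at most four values of $n$ in total; and $n\mapsto\langle x_n,\lambda\rangle=N\langle\beta,\lambda\rangle+n\langle\gamma,\lambda\rangle$ vanishes for at most one value of $n$ (if it is non-constant) or never (if it is the nonzero constant $N\langle\beta,\lambda\rangle$). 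Since $\Z$ is infinite, there exists $n_0\in\Z$ avoiding all these finitely many bad values, and I set $\alpha^{[i]}=x_{n_0}$. By construction $\alpha^{[i]}\in P\cap\lattice\subseteq U_{i-1}^{\perp}\cap\lattice$, so it is orthogonal to all previous generators; $\langle\alpha^{[i]},\alpha^{[i]}\rangle\notin\{0,2\}$; and $\langle\alpha^{[i]},\lambda\rangle\neq 0$. Finally $\alpha^{[i]}\neq 0$ since $\langle\alpha^{[i]},\alpha^{[i]}\rangle\neq 0$. After $\rankL$ steps we obtain pairwise orthogonal $\alpha^{[1]},\ldots,\alpha^{[\rankL]}\in\lattice$ with the required properties; they are linearly independent over $\Q$ (an orthogonal set of anisotropic vectors), hence $\oplus_{i=1}^{\rankL}\Z\alpha^{[i]}$ is a sublattice of $\lattice$ of full rank, completing the proof.
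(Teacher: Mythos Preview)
Your inductive build-up has a real gap, and it is exactly the one you flag but do not close: the projection $\lambda'$ of $\lambda$ onto $P=U_{i-1}^{\perp}$ can vanish, and then no vector in $P$ pairs nontrivially with $\lambda$. Your attempted argument (``$\langle\alpha^{[1]},\lambda\rangle\neq 0$ forces $\lambda$ to have a nonzero component perpendicular to $\alpha^{[1]}$'') is, as you note, backwards: it forces a nonzero component \emph{along} $\alpha^{[1]}$, not perpendicular to it. Concretely, take $\rankL=2$ and suppose at step~1 you happen to choose $\alpha^{[1]}$ proportional to $\lambda$ (nothing in your procedure prevents this; it satisfies $\langle\alpha^{[1]},\lambda\rangle\neq 0$ and the norm condition). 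Then $U_1=\Q\lambda$, $P=\lambda^{\perp}$, and every candidate $\alpha^{[2]}\in P$ has $\langle\alpha^{[2]},\lambda\rangle=0$. The fix is straightforward once identified: at each stage $i<\rankL$ add the constraint $\alpha^{[i]}\notin\Q\lambda'$. Since $\dim P\geq 2$ for $i<\rankL$, you may take $\beta,\gamma\in P\cap\lattice$ linearly independent, and then $x_n=N\beta+n\gamma$ lies in $\Q\lambda'$ for at most one $n$, so this is just one more bad value to avoid. With this amendment, $\lambda'\neq 0$ is maintained inductively and your argument goes through.

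The paper's proof takes a different route that sidesteps this bookkeeping entirely. Rather than building inside successive orthogonal complements, it starts with \emph{any} full-rank orthogonal sublattice $\oplus_i\Z\alpha^{[i]}$ (with $\langle\alpha^{[i]},\alpha^{[i]}\rangle\neq 0$), normalizes so that $\langle\alpha^{[1]},\lambda\rangle\neq 0$, and then, whenever some $\alpha^{[k]}$ is orthogonal to $\lambda$, performs an explicit $2\times 2$ ``rotation'' in the $(\alpha^{[1]},\alpha^{[k]})$-plane: $\beta^{[1]}=x\alpha^{[1]}+y\alpha^{[k]}$, $\beta^{[k]}=-y\langle\alpha^{[k]},\alpha^{[k]}\rangle\alpha^{[1]}+x\langle\alpha^{[1]},\alpha^{[1]}\rangle\alpha^{[k]}$, with $x,y$ chosen so that the new norms avoid $\{0,2\}$. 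Both new vectors inherit a nonzero pairing with $\lambda$ from $\alpha^{[1]}$, orthogonality with the remaining $\alpha^{[j]}$ is preserved, and the process repeats. The advantage is that one never loses sight of $\lambda$: the ``good'' direction is continually mixed into the bad ones rather than being walled off inside $U_{i-1}$.
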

\begin{proof}
We take a sublattice $\Gamma=\oplus_{i=1}^{\rankL}\Z\alpha^{[i]}$ of $\lattice$ 
so that
$\langle\alpha^{[i]},\alpha^{[i]}\rangle\neq 0$ for all $i=1,\ldots,\rankL$ and
$\langle\alpha^{[i]},\alpha^{[j]}\rangle=0$ for any distinct pair of elements $i,j\in\{1,\ldots,\rankL\}$.
Since $\lattice$ is non-degenerate, we may assume $\langle\alpha^{[1]},\lambda\rangle\neq 0$.
If $\langle\alpha^{[2]},\lambda\rangle=0$,
then writing $p=\langle\alpha^{[1]},\alpha^{[1]}\rangle$ and $q=\langle\alpha^{[2]},\alpha^{[2]}\rangle$
for simplicity, 
we take a pair of non-zero integers $x$ and $y$ so that $p x^2+q y^2\not\in\{0,\pm 1,\pm 2\}$
and set 
\begin{align}
\beta^{[1]}&:=x\alpha^{[1]}+y\alpha^{[2]},
\quad
\beta^{[2]}:=-yq\alpha^{[1]}+px\alpha^{[2]}\in \lattice.
\end{align}
Then, 
\begin{align}
\langle\beta^{[1]},\beta^{[1]}\rangle&=p x^2+q y^2\neq 0,&
\langle\beta^{[2]},\beta^{[2]}\rangle&=pq(px^2+q y^2)\neq 0,\nonumber\\
\langle\beta^{[1]},\beta^{[2]}\rangle&=0,\nonumber\\
\langle\beta^{[i]},\alpha^{[j]}\rangle&=0\ (i=1,2,j=3,\ldots,\rankL),\nonumber\\
\langle\beta^{[1]},\lambda\rangle&=x\langle\alpha^{[1]},\lambda\rangle\neq 0,&
\langle\beta^{[2]},\lambda\rangle&=-yq\langle\alpha^{[1]},\lambda\rangle\neq 0,
\end{align}
and
\begin{align}
(\alpha^{[1]},\alpha^{[2]})&=\frac{1}{(p x^2+q y^2)}(\beta^{[1]},\beta^{[2]})
\begin{pmatrix}
px&yq\\
-y&x
\end{pmatrix}.
\end{align}
We replace $\alpha^{[1]}$ and $\alpha^{[2]}$ by $\beta^{[1]}$ and $\beta^{[2]}$ respectively.
Repeating this procedure, we have the result.
\end{proof}
\begin{remark}
The reason why we avoid the case that $\langle\alpha^{[i]},\alpha^{[i]}\rangle=2$ in Lemma \ref{lemma:lattice-basis-change}
is that when $\langle\alpha^{[i]},\alpha^{[i]}\rangle=2$, we need to change the generators of $V_{\Z\alpha^{[i]}}^{+}$ and hence the commutation relations in the proof of Lemmas \ref{lemma:2lambda(not)inlattice} and 
\ref{lemma:Ext1V(Vlattice-,V\lattice+} below.
\end{remark}
\begin{lemma}
\label{lemma:2lambda(not)inlattice}
For $\lambda\in\lattice^{\perp}$ with $2\lambda\not\in\lattice$,
we have
$\Ext^{1}_{V_{\lattice}^{+}}(V_{\lambda+\lattice},V_{\lambda+\lattice})=0$.
For $\lambda\in\lattice^{\perp}\setminus\lattice$ with $2\lambda\in\lattice$,
we have
$\Ext^{1}_{V_{\lattice}^{+}}(V_{\lambda+\lattice}^{+},V_{\lambda+\lattice}^{+})=
\Ext^{1}_{V_{\lattice}^{+}}(V_{\lambda+\lattice}^{-},V_{\lambda+\lattice}^{-})=
\Ext^{1}_{V_{\lattice}^{+}}(V_{\lambda+\lattice}^{+},V_{\lambda+\lattice}^{-})=\Ext^{1}_{V_{\lattice}^{+}}(V_{\lambda+\lattice}^{-},V_{\lambda+\lattice}^{+})=0$.
\end{lemma}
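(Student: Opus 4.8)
The plan is to treat the five pairs $(\module,\mW)$ occurring in the statement uniformly, following the template described in the introduction. Fix an exact sequence $0\to\mW\xrightarrow{\iota}\mN\xrightarrow{\pi}\module\to 0$ of weak $V_{\lattice}^{+}$-modules; in every case $\lambda\in\lattice^{\perp}\setminus\lattice$. By Lemma \ref{lemma:lattice-basis-change} fix an orthogonal sublattice $\bigoplus_{i=1}^{\rankL}\Z\alpha^{[i]}$ of $\lattice$ with $\langle\alpha^{[i]},\alpha^{[i]}\rangle\notin\{0,2\}$ and $\langle\alpha^{[i]},\lambda\rangle\neq 0$ for all $i$, and take $h^{[i]}=\alpha^{[i]}/\sqrt{\langle\alpha^{[i]},\alpha^{[i]}\rangle}$ as the orthonormal basis of $\fh$. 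By \eqref{VL+module-decomposition-M1+}, $\module$ contains an irreducible $M(1)^{+}$-submodule $\mK\cong M(1,\lambda)$ whose one-dimensional lowest space $\C v$ can be taken with $v=e^{\lambda}$ when $\module=V_{\lambda+\lattice}$ and with $v=e^{\lambda}\pm\theta(e^{\lambda})$ when $\module=V_{\lambda+\lattice}^{\pm}$. For this $v$ one has $\omega^{[i]}_{1}v=\tfrac12\langle h^{[i]},\lambda\rangle^{2}v\neq 0$, the modes $\omega^{[i]}_{j}$ $(j\ge 2)$, $\Har^{[i]}_{j}$ $(j\ge 4)$ and $\nS_{lm}(1,r)_{j}$ $(l\neq m,\ r=1,2,3,\ j>r)$ annihilate $v$, and $\epsilon_{\module}(\ExB(\alpha^{[i]}),v)=|\langle\alpha^{[i]},\lambda\rangle|-1\ge 0$ by \eqref{eq:epsilonVlambda+lattice(E(alpha)-1} and \eqref{eq:epsilonVlambda+lattice(E(alpha)-2}.

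Granting the central claim stated below, the splitting is constructed as follows. Pick a preimage of $v$ in $\mN$; since $\module$ and $\mW$ are each direct sums of honest eigenspaces for the commuting operators $\{\omega^{[i]}_{1},\Har^{[i]}_{3},\Har^{\langle6\rangle,[i]}_{5}\}_{i=1}^{\rankL}$, we may decompose $\mN$ into the generalized eigenspaces $\mN_{\zeta,\xi,\rho}$ for them and replace the preimage by its component $\lu$ in the eigenspace whose eigenvalue triple $(\zeta,\xi,\rho)$ is that of $v$; then $\pi(\lu)=v$, and by \eqref{eq:aluinM} the operators $\omega^{[i]}_{1}-\zeta^{[i]}$, $\Har^{[i]}_{3}-\xi^{[i]}$, $\Har^{\langle6\rangle,[i]}_{5}-\rho^{[i]}$ send $\lu$ into $\iota(\mW)$ and annihilate it when iterated. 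The \emph{central claim} is that $\lu\in\Omega_{M(1)^{+}}(\mN)$ and that every zero mode $a_{\wt a-1}$, with $a\in M(1)^{+}$ homogeneous, acts on $\lu$ by the same scalar as on $v$. Granting it, $\C\lu$ is an $A(M(1)^{+})$-submodule of $\Omega_{M(1)^{+}}(\mN)$ isomorphic to the lowest space of $\mK$: indeed the matrix ideals $A^{u}$ and $A^{t}$ have no nonzero one-dimensional modules and hence annihilate both $v$ and $\lu$, so it suffices to match the zero-mode scalars of $\omega^{[i]},\Har^{[i]}$ and $\Lambda_{lm}$, which generate $A(M(1)^{+})/(A^{u}+A^{t})$. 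Then \cite[Corollary 5.9]{Tanabe2019} gives $M(1)^{+}\cdot\lu\cong\mK$, and since $\pi(\lu)=v\neq 0$ and $\mK$ is irreducible, the restriction of $\pi$ to $M(1)^{+}\cdot\lu$ is an isomorphism onto $\mK$. By Lemma \ref{lemma:generate-completely}, applied to the $M(1)^{+}$-submodule $M(1)^{+}\cdot\lu\cong M(1,\lambda)$ of $\mN$, the weak $V_{\lattice}^{+}$-module $V_{\lattice}^{+}\cdot\lu=V_{\lattice}^{+}\cdot(M(1)^{+}\cdot\lu)$ is completely reducible, while $\pi(V_{\lattice}^{+}\cdot\lu)=V_{\lattice}^{+}\cdot v=\module$ because $\module$ is irreducible and $v\neq 0$. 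Hence $\mN=\iota(\mW)+V_{\lattice}^{+}\cdot\lu$ is a sum of two completely reducible submodules, so $\mN$ itself is completely reducible; as $\mN/\iota(\mW)\cong\module$, the submodule $\iota(\mW)$ is a direct summand, the sequence splits, and $\Ext^{1}_{V_{\lattice}^{+}}(\module,\mW)=0$.

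It remains to prove the central claim, and this is the only laborious step. For each $i$ the inclusion $V_{\Z\alpha^{[i]}}^{+}\subseteq V_{\lattice}^{+}$ forces the null vectors $Q^{(4)},Q^{(5,1)},Q^{(6)}$ and $\sv^{(8),\Har}$ attached to $\alpha^{[i]}$, being zero in $V_{\Z\alpha^{[i]}}^{+}$, to act as zero on $\mN$. Expanding $Q^{(4)}_{t+4}\lu$, $Q^{(5,1)}_{t+5}\lu$ and $Q^{(6)}_{t+6}\lu$ for $t\ge\epsilon_{\mN}(\ExB(\alpha^{[i]}),\lu)$ by means of \cite[Lemma 2.2]{Tanabe2019}, in the spirit of the proof of \cite[Lemmas 3.10 and 5.5]{Tanabe2019} (and of Lemma \ref{lemma:M(1)-minus-expansion}), and using that $\pi(\lu)=v$ while the nilpotent parts of $\omega^{[i]}_{1},\Har^{[i]}_{3},\Har^{\langle6\rangle,[i]}_{5}$ on $\lu$ take values in $\iota(\mW)$ with vanishing square, one obtains linear relations which, together with $\epsilon_{\mN}(\ExB(\alpha^{[i]}),\lu)\ge|\langle\alpha^{[i]},\lambda\rangle|-1$, force $\epsilon_{\mN}(\ExB(\alpha^{[i]}),\lu)=|\langle\alpha^{[i]},\lambda\rangle|-1$, the vanishing of $\omega^{[i]}_{j}\lu$ $(j\ge 2)$, $\Har^{[i]}_{j}\lu$ $(j\ge 4)$, $\Har^{\langle6\rangle,[i]}_{j}\lu$ $(j\ge 6)$, and the vanishing of the nilpotent parts themselves, i.e. $\omega^{[i]}_{1}\lu=\zeta^{[i]}\lu$, $\Har^{[i]}_{3}\lu=\xi^{[i]}\lu$, $\Har^{\langle6\rangle,[i]}_{5}\lu=\rho^{[i]}\lu$. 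The hypotheses $\langle\alpha^{[i]},\alpha^{[i]}\rangle\notin\{0,2\}$ and $\langle\alpha^{[i]},\lambda\rangle\neq 0$ are precisely what make the polynomial coefficients occurring in these relations (the analogues of the factors $g_{1},\dots,g_{5}$ in Lemma \ref{lemma:M(1)-minus-expansion}) nonzero; the bookkeeping is carried out on Risa/Asir. Since $M(1)^{+}$ is strongly generated by the $\omega^{[i]},\Har^{[i]}$ and $\nS_{lm}(1,r)$ $(r=1,2,3)$, it then remains only to treat the cross modes, i.e. to show $\nS_{lm}(1,r)_{j}\lu=0$ for $j>r$ and $\nS_{lm}(1,r)_{r}\lu\in\C\lu$; this follows by the same method applied to the rank-two sublattices $\Z\alpha^{[l]}\oplus\Z\alpha^{[m]}$ (cf. \cite[Lemma 5.5]{Tanabe2019}), or by bootstrapping from the relations already obtained via the commutation relations of $M(1)^{+}$. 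This establishes the central claim and finishes the proof. The main obstacle is exactly this null-vector analysis — keeping track of which terms land in $\iota(\mW)$ and showing they are forced to vanish; everything surrounding it is formal.
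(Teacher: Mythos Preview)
Your overall architecture—lift $v$ to a generalized eigenvector $u\in\mN_{\zeta,\xi,\rho}$, prove $u\in\Omega_{M(1)^{+}}(\mN)$ with the correct zero-mode action, then invoke \cite[Corollary 5.9]{Tanabe2019} and Lemma \ref{lemma:generate-completely}—matches the paper, and the wrap-up after the central claim is fine. The gaps are in your outline of the central claim itself. First, the vanishing of $\omega^{[i]}_{j}u$ $(j\ge 2)$ and $\Har^{[i]}_{j}u$ $(j\ge 4)$ does \emph{not} fall out of the $Q^{(\bullet)}$-expansions: those expansions place $\omega_{k}$ $(k\ge 2)$ and $\Har_{k}$ $(k\ge 4)$ to the right of $E_{t}$ and only become tractable once these modes already kill $u$. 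The paper establishes this \emph{beforehand} by a separate argument: from $\Har^{[i]}_{3}u,\Har^{\langle6\rangle,[i]}_{5}u\in\C w\subset\mW$ and \eqref{eq;[H^4(0),omega(m)]-4} one gets $(-1-5m^{-3}\Har^{[i]}_{3}+5m^{-6}(\Har^{[i]}_{3})^{2}+9m^{-5}\Har^{\langle6\rangle,[i]}_{5})\omega^{[i]}_{m+1}u=0$ for $m\ge 1$, and since the $\Har^{[i]}_{3}$-eigenvalues on $\mW$ are nonnegative integers while $-1-5a+5a^{2}+9b\neq 0$ for $a,b\in\Z_{\ge 0}$, this forces $\omega^{[i]}_{2}u=0$; the rest follows from \eqref{eq;[H^4(0),omega(m)]-1}, \eqref{eq;[H^4(0),omega(m)]-31} and \eqref{eq:wiwj}.

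Second, the null-vector step is not a matter of a single polynomial in $p=\langle\alpha,\alpha\rangle$ being nonzero (your ``analogues of $g_{1},\dots,g_{5}$''). Here $\omega^{[1]}_{1}u$ is not yet known to be a scalar multiple of $u$, so $\omega^{[1]}_{1}$ must be carried through the computation as an operator. Eliminating $\Har^{[1]}_{3}E_{t}u$ and $((t+1-p)^{2}-2p\omega^{[1]}_{1})^{2}E_{t}u$ from \cite[(3.68)--(3.71)]{Tanabe2019} yields a relation whose $t$-dependent factor is $t(t-p+2)(2t-p+1)(2t-p+2)(2t-p+3)$, so the spurious values $t\in\{0,\ p/2-1,\ p-2\}$ must be eliminated one by one: each forces an $\omega^{[1]}_{1}$-eigenvalue on $\tilde u=((t+1-p)^{2}-2p\omega^{[1]}_{1})E_{t}u\in\mW$ that contradicts either the bound $t\ge|\langle\lambda,\alpha\rangle|-1$ or the integrality of $\Har^{[1]}_{3}$-eigenvalues from \cite[Proposition 4.3]{Abe2005}. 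Only after this case analysis does $\tilde u=0$ and $t=|\langle\lambda,\alpha\rangle|-1$ follow, and only then can one combine $(\omega^{[1]}_{1}-\zeta^{[1]})u\in\C w$ with \eqref{eq:epsilonVlambda+lattice(E(alpha)-1}--\eqref{eq:epsilonVlambda+lattice(E(alpha)-2} to conclude $(\omega^{[1]}_{1}-\zeta^{[1]})u=0$ and $\Har^{[1]}_{3}u=0$. (Incidentally, $\sv^{(8),\Har}$ plays no role in this lemma; it enters only in Lemma \ref{lemma:Ext1V(Vlattice+,V\lattice-}.)
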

\begin{proof}
Let $\lambda\in\lattice^{\perp}\setminus\lattice$.
By Lemma \ref{lemma:lattice-basis-change}, we can take a sublattice $\oplus_{i=1}^{\rankL}\Z\alpha_i$ of $\lattice$ of rank $\rankL$
such that 
$\langle\alpha_i,\alpha_i\rangle\not\in\{0,2\}$ for all $i=1,\ldots,\rankL$,
$\langle\alpha_i,\alpha_j\rangle=0$ for any distinct pair of elements $i,j\in\{1,\ldots,\rankL\}$,
and
$\langle\alpha_i,\lambda\rangle\neq 0$ for all $i=1,\ldots,\rankL$.
We take an orthonormal basis $h^{[1]},\ldots,h^{[\rankL]}$ of $\fh=\C\otimes_{\Z}\lattice$ defined by
\begin{align}
\label{eq:h[i]=frac1sqrtlanglealpha[i]}
h^{[i]}&=\frac{1}{\sqrt{\langle\alpha^{[i]},\alpha^{[i]}\rangle}}\alpha^{[i]}\quad (i=1,\ldots,\rankL).
\end{align}
We set $\zeta=(\langle\lambda,h^{[1]}\rangle^2/2,\ldots,\langle\lambda,h^{[\rankL]}\rangle^2/2)$ and $\zero=(0,\ldots,0)$.
When $2\lambda\not\in\lattice$,  we set $(\module,\mW)=(V_{\lambda+\lattice}, V_{\lambda+\lattice})$, and
$\lv=e^{\lambda}\in \module$, $\lw=e^{\lambda}\in \mW$.
When $2\lambda\in\lattice$, we set $(\module,\mW)=(V_{\lambda+\lattice}^{\rho},V_{\lambda+\lattice}^{\sigma})$ with $\rho,\sigma\in\{+,-\}$,
and
$\lv=e^{\lambda}\pm\theta(e^{\lambda})\in\module=V_{\lambda+\lattice}^{\pm}$,
$\lw=e^{\lambda}\pm\theta(e^{\lambda})\in\mW=V_{\lambda+\lattice}^{\pm}$.

Let
$0\rightarrow \mW\rightarrow \mN\overset{\pi}{\rightarrow} \module\rightarrow 0$
be  an exact sequence of weak $V_{\lattice}^{+}$-modules.
We take $\lu\in \mN_{\zeta,\zero,\zero}$ such that $\pi(\lu)=\lv$.
We fix $i\in\{1,\ldots,\rankL\}$.
Using a slight modification of the proof of \cite[Lemma 4.8]{Abe2005},
we shall first show that $\omega^{[i]}_{2+j} \lu=\Harfour^{[i]}_{4+j}\lu=0$ for all $j\geq 0$.
It follows from \cite[Proposition 4.3]{Abe2005} that
$\Har_{3}^{[i]}\lu,\Har_{5}^{6, [i]}\lu\in \C \lw\subset \mW$ and hence
\begin{align}
\label{eq:omega[i]jHar3langle}
\textcolor{black}{\omega^{[i]}_{j}\Har_{3}^{[i]}\lu=\omega^{[i]}_{j}\Har_{5}^{\langle 6\rangle, [i]}\lu}=0
\end{align}
for all $j\in\Z_{\geq 2}$.
For $m\in\Z_{\geq 1}$, by \eqref{eq;[H^4(0),omega(m)]-4} and \eqref{eq:omega[i]jHar3langle},
\begin{align}
\label{eq:(-1+frac-5m^3Hlangle 4rangle,[i]3}
	0&=(-1+
	\frac{-5}{m^3}H^{[i]}_{3}+
	\frac{5}{m^6}(H^{[i]}_{3})^2+
	\frac{9}{m^5}\Har^{\langle 6\rangle,[i]}_{5})\omega^{[i]}_{m+1}\lu.
\end{align}
It follows from \cite[Proposition 4.3]{Abe2005} that $\Har^{[i]}_{3}$ acts diagonally on $\mW$
and the eigenvalues of $\Har^{[i]}_{3}$ on $\mW$ are non-negative integers.
Since $0\neq -1-5a+5a^2+9b$ for any pair of $a,b\in \Z_{\geq 0}$ and $\omega^{[i]}_{2}\lu\in \mW$,
it follows from \eqref{eq:(-1+frac-5m^3Hlangle 4rangle,[i]3} with $m=1$ that
$\omega_2^{[i]}\lu=0$.
We have $\Har^{[i]}_{4}\lu=0$ by \eqref{eq;[H^4(0),omega(m)]-1}, 
$\Har^{\langle 6\rangle, [i]}_{6}\lu=0$ by \eqref{eq;[H^4(0),omega(m)]-3},
and $\Har^{[i]}_{5}\lu=(-2/3)\omega_{3}\lu$ by \eqref{eq;[H^4(0),omega(m)]-31}.
By \eqref{eq;[H^4(0),omega(m)]-1} with $m=2$ and \eqref{eq:omega[i]jHar3langle},
\begin{align}
\Harfour^{[i]}_{3}\omega^{[i]}_{3} \lu
&=-6\Har^{[i]}_{5}\lu-6\omega^{[i]}_{3}\lu=-2\omega_{3}^{[i]}\lu.
\end{align}
Since any eigenvalue of 
$\Harfour^{[i]}_{3}$ on $\mW$
is non-negative, we have $\omega^{[i]}_{3} \lu=0$.
It follows from \eqref{eq;[H^4(0),omega(m)]-1} and
\begin{align}
	[\omega^{[i]}_{j},\omega^{[i]}_{k}]&=(i-j)\omega^{[i]}_{j+k-1}+\delta_{j+k-2,0}\dfrac{j(j-1)(j-2)}{12}\label{eq:wiwj}
\end{align}
for all $j,k\in\Z$ that $\omega^{[i]}_{2+j} \lu=\Harfour^{[i]}_{4+j}\lu=0$ for all $j\geq 0$.

We shall show that $\lu$ is a simultaneous eigenvector for $\{\omega^{[i]}_1,\Har^{[i]}_{3}\}_{i=1}^{\rankL}$
and is an element of $\Omega_{M(1)^{+}}(\mN_{\zeta,\zero,\zero})$.
We fix $i=1,\ldots\rankL$.  After renumbering $\alpha_1,\ldots,\alpha_{\rankL}$, we may assume $i=1$
and we write $\alpha=\alpha^{[1]}, \mn=\langle\alpha^{[1]},\alpha^{[1]}\rangle,E=E(\alpha^{[1]})$, and $\lE=\epsilon(E(\alpha^{[1]}),\lu)$ for simplicity. 
We may assume 
\begin{align}
\label{eq:langlelambdaalpharanglegeq0}
\langle\lambda,\alpha\rangle\geq 0.
\end{align}
Since $N/\mW\cong \module$, we have
\begin{align}
\label{eq:lEgeqlanglelambda}
\lE\geq \langle\lambda,\alpha\rangle-1
\end{align}
by \eqref{eq:epsilonVlambda+lattice(E(alpha)-1} and \eqref{eq:epsilonVlambda+lattice(E(alpha)-2}.
In \cite[(3.68)--(3.71)]{Tanabe2019} we have computed $Q^{(4)}_{\lE+4},\lu$, 
$Q^{(5,1)}_{\lE+5},\lu$,
$Q^{(5,2)}_{\lE+5},\lu$, and
$Q^{(6)}_{\lE+6},\lu$ so that the resulting expressions are linear combinations of elements of the form 
\begin{align}
	a^{(1)}_{i_1}\cdots a^{(l)}_{i_l}\ExB_{m}b^{(1)}_{j_1}\cdots b^{(n)}_{j_n}\lu
\end{align}
where $l,n\in\Z_{\geq 0}$, $m\in\Z$, and 
\begin{align}
	(a^{(1)},i_1),\ldots,(a^{(l)},i_l)&\in\{(\omega,k)\ |\ k\leq 1\}\cup\{(\Har,k)\ |\ k\leq 2\},\nonumber\\
	(b^{(1)},j_1),\ldots,(b^{(n)},j_n)&\in\{(\omega,k)\ |\ k\geq 2\}\cup\{(\Har,k)\ |\ k\geq 3\}.
\end{align}

We shall compute $Q^{(4)}_{\lE+4}\lu,Q^{(5,1)}_{\lE+5}\lu, Q^{(5,2)}_{\lE+5}\lu$, and $Q^{(6)}_{\lE+6}\lu$
so that the resulting expressions are linear combinations of elements of the form 
\begin{align}
	a^{(1)}_{i_1}\cdots a^{(l)}_{i_l}\ExB_{m}b^{(1)}_{j_1}\cdots b^{(n)}_{j_n}\lu
\end{align}
where $l,n\in\Z_{\geq 0}$, $m\in\Z$, and 
\begin{align}
	(a^{(1)},i_1),\ldots,(a^{(l)},i_l)&\in\{(\omega,k)\ |\ k\leq 1\}\cup\{(\Har,k)\ |\ k\leq 3\},\nonumber\\
	(b^{(1)},j_1),\ldots,(b^{(n)},j_n)&\in\{(\omega,k)\ |\ k\geq 2\}\cup\{(\Har,k)\ |\ k\geq 4\}.
\end{align}
The results are
\begin{align}
\label{eq:left-H-1}
0&=
((\lE+1-\mn)^2-2\mn\omega^{[1]}_{1})
\nonumber\\
&\quad{}\times\big(-(\mn-2) (\mn-2 \omega^{[1]}_{1}) (4 \mn-3) +2 \mn (8 \mn-11) \lE-(16 \mn+3) \lE^{2}\big)\ExB_{\lE}\lu
\nonumber\\
&\quad{}-2 \mn (\mn-2) (2 \mn-9) (2 \mn-1)\Harfourone_{3}\ExB_{\lE}\lu,\\
\label{eq:left-H-2}
0&=((\lE+1-\mn)^2-2\mn\omega^{[1]}_{1})\nonumber\\
&\quad{}\times \big(2 (\mn-2) (\mn-2 \omega^{[1]}_{1}) (15 \mn^2-16 \mn+3)\nonumber\\&\qquad{}-(118 \mn^3+(-16 \omega^{[1]}_{1}-193) \mn^2+(18 \omega^{[1]}_{1}+35) \mn+6)t\nonumber\\&\qquad{}+(112 \mn^2+6 \mn-21)t^{2}+(8 \mn-9)t^{3}\big)\ExB_{\lE}\lu\nonumber\\
&\quad{}+2 (\mn-2) (2 \mn-1) ((6 \mn^2-5 \mn+6) \lE+10 \mn^3-54 \mn^2+10 \mn+6)\Harfourone_{3}\ExB_{\lE}\lu,\\
\label{eq:left-H-3}
0&=((\lE+1-\mn)^2-2\mn\omega^{[1]}_{1})\nonumber\\
&\quad{}\times\big((\mn-2) (\mn-2 \omega^{[1]}_{1}) (72 \mn^3+44 \mn^2-235 \mn+120)\nonumber\\
&\qquad{}-(284 \mn^4+(-40 \omega^{[1]}_{1}-3) \mn^3+(6 \omega^{[1]}_{1}-1120) \mn^2+(88 \omega^{[1]}_{1}+754) \mn-48 \omega^{[1]}_{1}+60)\lE\nonumber\\
&\qquad{}+(272 \mn^3+410 \mn^2-363 \mn-270)\lE^{2}\nonumber\\&\qquad{}+(16 \mn^2+61 \mn-102)\lE^{3}\big)\ExB_{\lE}\lu\nonumber\\
&\quad{}+2 (\mn-2) (2 \mn-1) ((14 \mn^3+21 \mn^2-74 \mn+60) \lE+24 \mn^4\nonumber\\
&\qquad{}-90 \mn^3-221 \mn^2+220 \mn+60)\Harfourone_{3}\ExB_{\lE}\lu,\\
\label{eq:left-H-4}
0&=
((\lE+1-\mn)^2-2\mn\omega^{[1]}_{1})\nonumber\\
&\quad{}\times\big(
 -3 (\mn-2) (\mn-2 \omega^{[1]}_{1}) (616 \mn^5-1262 \mn^4+(-40 \omega^{[1]}_{1}+1958) \mn^3\nonumber\\
&\qquad\quad{}+(10 \omega^{[1]}_{1}-2397) \mn^2+1232 \mn-150)\nonumber\\
&\qquad+3 (2464 \mn^6+(-704 \omega^{[1]}_{1}-6537) \mn^5+(1618 \omega^{[1]}_{1}+9700) \mn^4\nonumber\\
&\qquad\quad{}+(-2452 \omega^{[1]}_{1}-12590) \mn^3+(2848 \omega^{[1]}_{1}+7496) \mn^2\nonumber\\
&\qquad\quad{}+(-1388 \omega^{[1]}_{1}-623) \mn+150 \omega^{[1]}_{1}-90)\lE\nonumber\\
&\qquad{}-3 (2464 \mn^5-2793 \mn^4+(40 \omega^{[1]}_{1}+4907) \mn^3+(-90 \omega^{[1]}_{1}-3483) \mn^2\nonumber\\
&\qquad\quad{}+(20 \omega^{[1]}_{1}-1252) \mn+385)\lE^{2}\nonumber\\
&\qquad{}-15 (\mn-2) (2 \mn+19) (4\mn-1)\lE^{3}-15 (\mn-2) (4 \mn-1)\lE^{4}\big)\ExB_{\lE}\lu\nonumber\\
&\quad{}-(\mn-2) (2 \mn-1) ((1056 \mn^5-582 \mn^4+1428 \mn^3-1932 \mn^2+1992 \mn-450) \lE\nonumber\\
&\qquad{}+792 \mn^6+(176 \omega^{[1]}_{1}-6224) \mn^5+(-392 \omega^{[1]}_{1}+8666) \mn^4\nonumber\\
&\qquad{}+
(628 \omega^{[1]}_{1}-13729) \mn^3+(-352 \omega^{[1]}_{1}+11014) \mn^2\nonumber\\
&\qquad{}+(192 \omega^{[1]}_{1}+120) \mn-450)\Harfourone_{3}\ExB_{\lE}\lu.
\end{align}

Deleting the terms including $\ExB_{\lE}\Harfourone_{3}\lu$ and $((\lE+1-\mn)^2-2\mn\omega^{[1]}_{1})^2\ExB_{\lE}\lu$ from 
\cite[(3.68)--(3.71)]{Tanabe2019},
we have
\begin{align}
\label{eq:mn2(2mn-9)(2mn-1)(4mn2-12mn+15)}
0&=
\mn^2(2\mn-9)(2\mn-1)(4\mn^2-12\mn+15)(10\mn^2-4\mn+3)\nonumber\\
&\quad{}\times (44\mn^4-13\mn^3+62\mn^2-48\mn+18)\nonumber\\
&\quad{}\times \lE(\lE-\mn+2)(2\lE-\mn+1)(2\lE-\mn+2)(2\lE-\mn+3)\nonumber\\
&\quad{}\times ((\lE+1-\mn)^2-2\mn\omega^{[1]}_{1})\ExB_{\lE}\lu.
\end{align}
We set 
\begin{align}
\tilde{\lu}&:=((\lE+1-\mn)^2-2\mn\omega^{[1]}_{1})\ExB_{\lE}\lu=\ExB_{\lE}((\lE+1)^2-2\mn\omega^{[1]}_{1})\lu
\end{align}
and assume $\tilde{\lu}\neq 0$.
By \eqref{eq:mn2(2mn-9)(2mn-1)(4mn2-12mn+15)}, we have $\lE=0, \mn/2-1,$ or $\mn-2$.
\begin{enumerate}
\item
If $\lE=\mn-2$, then deleting the terms including $\ExB_{\lE}\Harfourone_{3}\lu$ from \cite[(3.68) and (3.69)]{Tanabe2019},
we have $(\omega_1^{[1]}-1)\tilde{\lu}=0$.
Since
$\langle\lambda,\alpha\rangle^2/(2\mn)+\mn/2-\lE-1=1$ by \eqref{eq:(omega[1]1-zeta[1]+fracmn},
we have
$\langle\lambda,\alpha\rangle^2=\mn^2$ and hence $\lE=\mn-2<\langle\lambda,\alpha\rangle-1$
by \eqref{eq:langlelambdaalpharanglegeq0}, which contradicts \eqref{eq:lEgeqlanglelambda}.

\item
If $\lE=\mn/2-1$, then deleting the terms including $\Harfourone_{3}\ExB_{\lE}\lu$ 
from \eqref{eq:left-H-1}--\eqref{eq:left-H-4}, we have
\begin{align}
0&=(16\omega^{[1]}_1-1)
(16\omega^{[1]}_1-9)\tilde{\lu}
\end{align}
and hence there exists a non-zero $\lw\in \langle\omega^{[1]}_{1}\rangle\lu$
such that $(\omega^{[1]}_{1}\lw, \Har^{[1]}_{3}\lw)=
((1/16)\lw,(-1/128)\lw)$ or 
$((9/16)\lw,(15/128)\lw)$ by \eqref{eq:left-H-1}.
This contradicts to \cite[Proposition 4.3]{Abe2005} which shows that the eigenvalues of $\Harfourone_3$ on 
$V_{\lambda+\lattice}$ for any $\lambda\in\lattice^{\perp}$
are non-negative integers.

\item
If $\lE=0$, then deleting the terms including $\Harfourone_{3}\ExB_{\lE}\lu$ 
from \eqref{eq:left-H-1}--\eqref{eq:left-H-4}, we have $\omega^{[1]}_1\tilde{\lu}=(\mn/2)\tilde{\lu}$.
Since $\langle\lambda,\alpha\rangle/(2\mn)+\mn/2-\lE-1=\mn/2$ by \eqref{eq:(omega[1]1-zeta[1]+fracmn}, we have 
$\langle\lambda,\alpha\rangle=2\mn$, which contradicts \eqref{eq:langlelambdaalpharanglegeq0} and \eqref{eq:lEgeqlanglelambda}.
\end{enumerate}
Thus, $\tilde{\lu}=(\omega^{[1]}_1-(\lE+1-\mn)^2/(2\mn))E_{\lE}\lu=E_{\lE}(\omega_1^{[1]}-(\lE+1)^2/(2\mn))\lu=0$
and hence $(\lE+1)^2=\langle\lambda,\alpha\rangle^2$ by \eqref{eq:(omega[1]1-zeta[1]+fracmn},
which leads 
\begin{align}
\label{eq:lElanglelambda,alpharangle-1}
\lE&=\langle\lambda,\alpha\rangle-1
\end{align}
by \eqref{eq:lEgeqlanglelambda}.
Since $(\omega^{[1]}_1-(\lE+1)^2/(2\mn))\lu\in W_{\zeta,\zero,\zero}=\C\lw$,
it follows from \eqref{eq:epsilonVlambda+lattice(E(alpha)-1} and \eqref{eq:epsilonVlambda+lattice(E(alpha)-2}
that $(\omega^{[1]}_1-(\lE+1)^2/(2\mn))\lu=0$.
By \cite[(3.68)]{Tanabe2019}, $\ExB_{\lE}\Harfourone_{3}\lu=0$.
Since $\Harfourone_{3}\lu\in W_{\zeta,\zero,\zero}=\C\lw$,
it follows from \eqref{eq:epsilonVlambda+lattice(E(alpha)-1} and \eqref{eq:epsilonVlambda+lattice(E(alpha)-2}
that $\Harfourone_{3}\lu=0$.
We conclude that $\lu$ is a simultaneous eigenvector for $\{\omega^{[i]}_1,\Har^{[i]}_{3}\}_{i=1}^{\rankL}$. 
The same argument as in the proof (1) of \cite[Lemma 5.5]{Tanabe2019} shows that $\epsilon(S_{ij}(1,1),\lu)\leq 1$ for 
any pair of distinct elements $i,j\in \{1,\ldots,\rankL\}$
and hence $\lu\in \Omega_{M(1)^{+}}(\mN_{\zeta,\zero,\zero})$.

Since $M(1)^{-}$ and $M(1)(\theta)^{-}$ are not $M(1)^{+}$-submodules of $\mN$, $A^{u}\lu=A^{t}\lu=0$
and hence $A(M(1)^{+})\cdot \lu
=\langle
\Lambda_{ij}\ |\ i,j=1,\ldots,\rankL\mbox{ with }i\neq j\rangle\lu$.
By \cite[(6.1.15)]{DN2001}, 
$\Lambda_{ij}^2\lu=4\omega^{[i]}_{1}\omega^{[j]}_{1}\lu=\langle h^{[i]},\lambda\rangle^2\langle h^{[j]},\lambda\rangle^2\lu\neq 0$.
Since $(\Lambda_{ij}-\langle h^{[i]},\lambda\rangle\langle h^{[j]},\lambda\rangle)^2\lu=0$,
we  have $\Lambda_{ij}\lu=\langle h^{[i]},\lambda\rangle\langle h^{[j]},\lambda\rangle\lu$
and hence $\C \lu\cong M(1,\lambda)(0)$ as $A(M(1)^{+})$-modules.
By \cite[Corollary 5.9]{Tanabe2019}, $M(1)^{+}\cdot \lu\cong M(1,\lambda)$ as $M(1)^{+}$-modules.
Now, the result follows from Lemma \ref{lemma:generate-completely}.
\end{proof}

\begin{lemma}
\label{lemma:Ext1V(Vlattice+,V\lattice-}
$\Ext^{1}_{V_{\lattice}^{+}}(V_{\lattice}^{+},V_{\lattice}^{-})=0$.
\end{lemma}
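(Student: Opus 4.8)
The plan is to show that every short exact sequence
\begin{align*}
0\longrightarrow V_{\lattice}^{-}\longrightarrow \mN\overset{\pi}{\longrightarrow}V_{\lattice}^{+}\longrightarrow 0
\end{align*}
of weak $V_{\lattice}^{+}$-modules splits, and for this it suffices to produce $\lu\in\mN$ with $\pi(\lu)=\vac$ and $\omega_{0}\lu=0$. Indeed $\omega_{0}$ acts on $\mN$ as a derivation, so $\omega_{0}\lu=0$ forces $Y_{\mN}(a,x)\lu\in\mN[[x]]$ for every $a\in V_{\lattice}^{+}$ (compare the lowest-order terms of $\tfrac{d}{dx}Y_{\mN}(a,x)\lu=\omega_{0}Y_{\mN}(a,x)\lu$), i.e.\ $\lu$ is vacuum-like; then $a\mapsto a_{-1}\lu$ is a homomorphism $V_{\lattice}^{+}\to\mN$ by \cite[Proposition 4.7.7]{LL}, which is nonzero, hence injective since $V_{\lattice}^{+}$ is simple, and splits $\pi$.

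To construct $\lu$: by Lemma \ref{lemma:lattice-basis-change} (applied with any nonzero $\lambda\in\lattice$) fix a sublattice $\bigoplus_{i=1}^{\rankL}\Z\alpha^{[i]}$ of $\lattice$ with the $\alpha^{[i]}$ pairwise orthogonal and $\langle\alpha^{[i]},\alpha^{[i]}\rangle\notin\{0,2\}$, put $h^{[i]}=\alpha^{[i]}/\sqrt{\langle\alpha^{[i]},\alpha^{[i]}\rangle}$, and work with this orthonormal basis of $\fh$. Since $V_{\lattice}^{\pm}$ are completely reducible over the commutative algebra generated by $\{\omega^{[i]}_{1},\Har^{[i]}_{3},\Har^{\langle 6\rangle, [i]}_{5}\}_{i=1}^{\rankL}$ and $\vac$ lies in its $(\zero,\zero,\zero)$-generalized eigenspace of $V_{\lattice}^{+}$, choose $\lu\in\mN_{\zero,\zero,\zero}$ with $\pi(\lu)=\vac$; by \eqref{eq:aluinM} the vectors $\omega^{[i]}_{1}\lu,\Har^{[i]}_{3}\lu,\Har^{\langle 6\rangle, [i]}_{5}\lu$ lie in $V_{\lattice}^{-}$, and by \eqref{VL+module-decomposition-M1+} and \cite[Proposition 4.3]{Abe2005} the operators $\Har^{[i]}_{3},\Har^{\langle 6\rangle, [i]}_{5}$ act semisimply on $V_{\lattice}^{-}$ with non-negative integer eigenvalues. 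Fixing $i$ and writing, after renumbering, $\alpha=\alpha^{[1]}$, $\mn=\langle\alpha,\alpha\rangle$, $E=E(\alpha)$, $\Har=\Har^{[1]}$, $\omega=\omega^{[1]}$, one expands $Q^{(4)}_{\lE+4}\lu$, $Q^{(5,1)}_{\lE+5}\lu$, $Q^{(6)}_{\lE+6}\lu$ for $\lE\geq\epsilon_{\mN}(E,\lu)$ (as in the proofs of Lemmas \ref{lemma:M(1)-minus-expansion} and \ref{lemma:2lambda(not)inlattice}, but now with $\vac$ in place of $e^{\lambda}$) and combines the resulting identities with \eqref{eq;[H^4(0),omega(m)]-1}--\eqref{eq;[H^4(0),omega(m)]-4}, \eqref{eq:wiwj}, the above semisimplicity, and Lemma \ref{lemma:M(1)-minus-expansion}(2) (applied with $k\alpha^{[i]}$, $k\in\Z_{>0}$, for which $\langle k\alpha^{[i]},k\alpha^{[i]}\rangle\notin\{0,2\}$ and the rank-one data are again $\omega^{[i]},\Har^{[i]}$), to obtain $\omega^{[i]}_{j}\lu=0$ for all $j\geq 1$, $\Har^{[i]}_{j}\lu=0$ for all $j\geq 3$, and $E(k\alpha^{[i]})_{j}\lu=0$ for all $j\geq 0$ and $k\geq 1$, for every $i$. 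It then remains to prove $\omega^{[i]}_{0}\lu=0$; for this the iterate identity together with $E(\pm\alpha^{[i]})_{j}\lu=0$ $(j\geq0)$ gives $\big(E(\alpha^{[i]})_{\langle\alpha^{[i]},\alpha^{[i]}\rangle-3}E(-\alpha^{[i]})\big)_{m}\lu=0$ for all $m\geq0$, while $E(\alpha^{[i]})_{\langle\alpha^{[i]},\alpha^{[i]}\rangle-3}E(-\alpha^{[i]})$ is a $\theta$-invariant weight-$2$ element of $V_{\lattice}^{+}$ whose $M(1)$-component (built from $h^{[i]}$-modes only) is a non-zero multiple of $\omega^{[i]}$ and whose remaining component lies in the $\theta$-invariant subspace $M(1)^{+}\cdot E(2\alpha^{[i]})$ of $M(1,2\alpha^{[i]})\oplus M(1,-2\alpha^{[i]})$ (and is $0$ for weight reasons when $\langle\alpha^{[i]},\alpha^{[i]}\rangle>0$); taking the zero mode on $\lu$ and using $E(2\alpha^{[i]})_{j}\lu=0$ $(j\geq0)$ with the relations already obtained to annihilate the remaining component's contribution, one gets $\omega^{[i]}_{0}\lu=0$. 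Summing over $i$ gives $\omega_{0}\lu=0$, and the first paragraph completes the proof.

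The main obstacle I anticipate is the computational core of the second paragraph — forcing the modes of $\omega^{[i]},\Har^{[i]}$ and of the $E(k\alpha^{[i]})$ down to zero on $\lu$, and then disentangling $\omega^{[i]}_{0}\lu$ from the $M(1,\pm2\alpha^{[i]})$-correction — in the directions $i$ with $\langle\alpha^{[i]},\alpha^{[i]}\rangle<0$, where the relevant weight spaces of $V_{\lattice}^{-}$ are infinite-dimensional, the short weight-counting arguments that suffice in the positive-definite case of \cite{Abe2005,DJL2012} break down, and one is forced (as throughout this paper) to invoke the full strength of the computer-verified relations $Q^{(4)},Q^{(5,1)},Q^{(6)}$ (and their analogues for $2\alpha^{[i]}$) together with a delicate case analysis of the kind in the proof of Lemma \ref{lemma:2lambda(not)inlattice}.
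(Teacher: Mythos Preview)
Your overall plan --- find $u\in\mN$ with $\pi(u)=\vac$ and $\omega_0 u=0$, then split via the vacuum-like vector --- is exactly the paper's, and your reductions $\omega^{[i]}_j u=0$ $(j\geq 1)$, $\Har^{[i]}_j u=0$ $(j\geq 3)$, $E(k\alpha^{[i]})_j u=0$ $(j\geq 0)$ are sound (the hypothesis of Lemma~\ref{lemma:M(1)-minus-expansion} that $V_{\Z\alpha}^{+}$ be a \emph{submodule} is only used through $\omega_2 u,\Har_4 u\in\C\vac$, and here both vanish).

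The gap is the step you yourself flag as ``the main obstacle'': disentangling $\omega^{[i]}_0 u$ from the $M(1,\pm 2\alpha^{[i]})$-correction when $p_i=\langle\alpha^{[i]},\alpha^{[i]}\rangle<0$. There $E(\alpha^{[i]})_{p_i-3}E(\alpha^{[i]})=c\,\omega^{[i]}+R$ with $R\neq 0$ of weight $2$, and the iterate identity gives only $c\,\omega^{[i]}_0 u=-R_0 u$. Writing $R=\sum a^{(k)}_{n_k}E(2\alpha^{[i]})$ with $a^{(k)}\in M(1,\C h^{[i]})^+$ and expanding $R_0 u$ by Borcherds, the terms containing $E(2\alpha^{[i]})_j u$ $(j\geq 0)$ vanish, but the surviving terms $E(2\alpha^{[i]})_{n_k-j}\,a^{(k)}_j u$ with $0\leq j<\wt a^{(k)}$ involve precisely the unknown low modes $\omega^{[i]}_0 u,\ \Har^{[i]}_2 u,\ldots$ you are trying to determine; passing to $E(4\alpha^{[i]})$ only reproduces the circularity one step higher. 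Nothing in your proposal breaks this loop, and invoking ``a delicate case analysis'' is not an argument.

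The paper avoids this entirely by reversing the order of ideas. It first uses $\sv^{(8),H}_6 u=0$ (not the $Q^{(i)}$) to get $\omega^{[i]}_0 u=3\Har^{[i]}_2 u$, then computes $\omega^{[j]}_1(\omega^{[i]}_0 u)=\Har^{[j]}_3(\omega^{[i]}_0 u)=\delta_{ij}\,\omega^{[i]}_0 u$ and invokes \cite[Proposition~4.3]{Abe2005} to pin down $\omega^{[i]}_0 u\in\C h^{[i]}(-1)\vac\subset V_{\lattice}^{-}$, so $\omega_0 u=\beta(-1)\vac$ for some $\beta\in\fh$. Only then does $E(\alpha)$ enter, and without any $Q$-relations: the elementary induction $E(\alpha)_n=(-1/(n{+}1))[\omega_0,E(\alpha)_{n+1}]$ together with the explicit formula $E(\alpha)_n\beta(-1)\vac=-\langle\alpha,\beta\rangle(e^{\alpha}-\theta(e^{\alpha}))_{n-1}\vac+\beta(-1)E(\alpha)_n\vac$ gives $E(\alpha)_n u=0$ for $n\geq 0$, whence $0=E(\alpha)_0\omega_0 u=-\langle\alpha,\beta\rangle(e^{\alpha}-\theta(e^{\alpha}))$ for every $\alpha\in\lattice$, and non-degeneracy of $\lattice$ forces $\beta=0$. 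No tower of $E(2^k\alpha^{[i]})$-corrections arises.
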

\begin{proof}
Let
$0\rightarrow V_{\lattice}^{-}\rightarrow \mN\overset{\pi}{\rightarrow} V_{\lattice}^{+}\rightarrow 0$
be an exact sequence of weak $V_{\lattice}^{+}$-modules.
Since  $(V_{L}^{+})_{\zero,\zero,\zero}=\C\vac$ and $(V_{L}^{-})_{\zero,\zero,\zero}=0$,
we can take a non-zero $\lu\in \mN_{\zero,\zero,\zero}$ such that $\pi(\lu)=\vac$ and
$\omega_1^{[i]}\lu=\Har^{[i]}_{3}\lu=\Har^{\langle 6\rangle,[i]}_{5}\lu=0$ for all $i=1,\ldots,\rankL$.
The same argument as in the proof of Lemma \ref{lemma:2lambda(not)inlattice} (cf. \eqref{eq:omega[i]jHar3langle}--
\eqref{eq:wiwj}) shows that 
$\omega^{[i]}_{2+j} \lu=\Harfour^{[i]}_{4+j}\lu=0$ for all $i=1,\ldots,\rankL$ and $j\geq 0$.
%\textcolor{red}{$\lu\in\Omega_{M(1)^{+}}(\mN)$}.
Since $\sv^{(8),H}_{6}\lu=0$, we have
\begin{align}
\omega^{[i]}_0\lu&=3\Har^{[i]}_{2}\lu.
\end{align}
For $i,j=1,\ldots,\rankL$,
\begin{align}
\omega_{1}^{[j]}\omega_0^{[i]}\lu
&=-[\omega^{[i]}_0,\omega^{[j]}_{1}]\lu
=\delta_{ij}\omega^{[i]}_{0}\lu
\end{align}
and
\begin{align}
	\Har^{[j]}_{3}\omega^{[i]}_0\lu
	&=-[\omega^{[i]}_0,\Har^{[j]}_{3}]\lu
	=\delta_{ij}3\Har^{[i]}_{2}\lu
	=\delta_{ij}\omega^{[i]}_{0}\lu.
\end{align}
It follows from \cite[Proposition 4.3]{Abe2005} that for any $i=1,\ldots,\rankL$
\begin{align}
\{\lv\in V_{\lattice}^{+}\ |\ \omega_{1}^{[j]}\lv=\Har^{[j]}_{3}\lv=\delta_{ij}\lv
\mbox{ for all }j=1,\ldots,\rankL
\}&=\{0\}\mbox{ and }\nonumber\\
\{\lv\in V_{\lattice}^{-}\ |\ \omega_{1}^{[j]}\lv=\Har^{[j]}_{3}\lv=\delta_{ij}\lv
\mbox{ for all }j=1,\ldots,\rankL
\}&\subset \C h^{[i]}(-1)\vac
\end{align}
and hence that $\omega^{[i]}_0\lu\in \C h^{[i]}(-1)\vac\subset V_{\lattice}^{-}$.
We take $\beta\in\fh$ so that $\beta(-1)\vac=\omega_{0}\lu=\sum_{i=1}^{\rankL}\omega^{[i]}_0\lu\in V_{\lattice}^{-}$.
The following argument is a slight modification of a part of the proof of \cite[Proposition 4.6]{Abe2002}.
Since for all $\alpha\in\lattice$ and $n\in\Z$,
\begin{align}
	\label{eq:E(alpha)nh(-1)vac}
E(\alpha)_{n}\beta(-1)\vac
&=-[\beta(-1),E(\alpha)_{n}]\vac+\beta(-1)E(\alpha)_{n}\vac\nonumber\\
&=-(\beta(0)E(\alpha))_{n-1}\vac+\beta(-1)E(\alpha)_{n}\vac\nonumber\\
&=-\langle \alpha,\beta\rangle (e^{\alpha}-\theta (e^{\alpha}))_{n-1}
\vac+\beta(-1)E(\alpha)_{n}\vac,
\end{align}
we have $E(\alpha)_{n}\omega_{0}\lu=0$ for all $n\geq 1$.
Since $E(\alpha)_{n}=(-1/(n+1))[\omega_0,E(\alpha)_{n+1}]$ for all $n\geq 0$,
we have $\epsilon(\ExB(\alpha),\lu)\leq 0$. Moreover, 
since $E(\alpha)_{0}\lu=-[\omega_0,E(\alpha)_{1}]\lu=0$, we have 
%$\epsilon(\ExB(\alpha),\lu)\leq -1$
%and hence 
$E(\alpha)_{0}\omega_0\lu=\omega_0E(\alpha)_{0}\lu=0$.
Since $\lattice$ is non-degenerate,
it follows from \eqref{eq:E(alpha)nh(-1)vac} with $n=0$ that $\omega_0\lu=0$ and hence
 $V_{\lattice}^{+}\cdot \lu\cong V_{\lattice}^{+}$.
Now, the result follows from Lemma \ref{lemma:generate-completely}.
\end{proof}

\begin{lemma}
\label{lemma:Ext1V(Vlattice-,V\lattice+}
$\Ext^{1}_{V_{\lattice}^{+}}(V_{\lattice}^{-},V_{\lattice}^{+})=0$.
\end{lemma}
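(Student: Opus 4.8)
The plan is to mirror the structure of the proof of Lemma \ref{lemma:Ext1V(Vlattice+,V\lattice-} but with the roles of $V_{\lattice}^{+}$ and $V_{\lattice}^{-}$ interchanged, which forces us to work with the lowest piece of $M(1)^{-}$ rather than the vacuum. Let $0\rightarrow V_{\lattice}^{+}\rightarrow \mN\overset{\pi}{\rightarrow} V_{\lattice}^{-}\rightarrow 0$ be an exact sequence of weak $V_{\lattice}^{+}$-modules. Fix an orthonormal basis $h^{[1]},\ldots,h^{[\rankL]}$ of $\fh$ coming from a sublattice $\oplus_{i}\Z\alpha^{[i]}$ with $\langle\alpha^{[i]},\alpha^{[i]}\rangle\not\in\{0,2\}$ as in Lemma \ref{lemma:lattice-basis-change} (applied to $\lambda = h^{[1]}$, say, so that all $\langle\alpha^{[i]},h^{[1]}\rangle \neq 0$). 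The space $(V_{\lattice}^{-})_{\zero,\zero,\zero}$ is the $\rankL$-dimensional space $\oplus_{i}\C h^{[i]}(-1)\vac$; pick a nonzero vector $v \in (V_{\lattice}^{-})_{\zero,\zero,\zero}$, which after renumbering we may take to be (a scalar multiple of) $h^{[1]}(-1)\vac$, and choose $\lu \in \mN$ with $\pi(\lu)=v$. By the argument recalled in \eqref{eq:aluinM}–\eqref{eq:(omega[1]1-zeta[1]+fracmn} we may choose $\lu\in \mN_{\zero,\zero,\zero}$; note $\omega^{[i]}_{1}\lu, \Har^{[i]}_{3}\lu, \Har^{\langle 6\rangle,[i]}_{5}\lu \in V_{\lattice}^{+}$ and are annihilated by a second application of any of these operators.

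First I would pin down the action of the nilpotent operators $\omega^{[i]}_{1},\Har^{[i]}_{3}$ on $\lu$. Since $v = h^{[1]}(-1)\vac$ satisfies $\omega^{[1]}_{1}v = v$, $\omega^{[i]}_{1}v=0$ for $i\neq 1$, $\Har^{[1]}_{3}v = v$ and $\Har^{[i]}_{3}v=0$ for $i\neq 1$, the differences $(\omega^{[1]}_{1}-1)\lu$, $(\omega^{[i]}_{1})\lu$ ($i\neq 1$), $(\Har^{[1]}_{3}-1)\lu$, $(\Har^{[i]}_{3})\lu$ ($i\neq 1$) all lie in $V_{\lattice}^{+}$. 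I would then run the argument of Lemma \ref{lemma:2lambda(not)inlattice} (using \eqref{eq;[H^4(0),omega(m)]-4}, \eqref{eq:omega[i]jHar3langle}, \eqref{eq;[H^4(0),omega(m)]-1}–\eqref{eq;[H^4(0),omega(m)]-31} and \eqref{eq:wiwj}) to get $\omega^{[i]}_{2+j}\lu = \Har^{[i]}_{4+j}\lu = 0$ for all $i$ and all $j\geq 0$; the only point to check is that the relevant eigenvalue obstructions ($0\neq -1-5a+5a^2+9b$ for $a,b\in\Z_{\geq 0}$, and non-negativity of $\Har^{[i]}_{3}$-eigenvalues on $V_{\lattice}^{\pm}$) still apply, which they do since $\Har^{[1]}_{3}$ has eigenvalue $1$ and $\Har^{[i]}_{3}$ eigenvalue $0$ on $v$. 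This shows $\lu$ behaves like a lowest-weight vector, and for the index $i=1$ with $\langle\alpha^{[1]},\alpha^{[1]}\rangle\not\in\{0,2\}$ the hypotheses of Lemma \ref{lemma:M(1)-minus-expansion}(1) are exactly met once we check $\epsilon_{\mN}(E(\alpha^{[1]}),\lu)\geq 0$; for $i\neq 1$ the hypotheses of part (2) are met after checking $\epsilon_{\mN}(E(\alpha^{[i]}),\lu)\geq -1$. These $\epsilon$-lower-bounds follow because $\pi$ is surjective and $\epsilon_{V_{\lattice}^{-}}(E(\alpha^{[i]}),v)$ is computed from \eqref{eq:epsilonVlambda+lattice(E(alpha)-1}: for $v=h^{[1]}(-1)\vac$ we have $\langle\alpha^{[1]},h^{[1]}\rangle\neq 0$ giving $\epsilon\geq 0$ at $i=1$, while at $i\neq 1$ we use $\langle\alpha^{[i]},h^{[1]}\rangle\neq 0$ as well.

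Next, Lemma \ref{lemma:M(1)-minus-expansion} gives $\epsilon_{\mN}(\omega,\lu)=1$ and $\epsilon_{\mN}(E(\alpha^{[i]}),\lu) \in \{0,-1\}$ depending on $i$; in particular $\omega_{2}\lu = 0$ (from part (1) at $i=1$ one gets $\epsilon_{\mN}(\omega^{[1]},\lu)=1$, and $\omega^{[i]}_{2}\lu=0$ for $i\neq 1$ from part (2)), so $\sum_{i}\omega^{[i]}_{2}\lu=\omega_{2}\lu=0$ and similarly $\Har^{[i]}_{4}\lu=0$. Then I would imitate the remainder of Lemma \ref{lemma:2lambda(not)inlattice}: $\lu$ is a simultaneous eigenvector for $\{\omega^{[i]}_{1},\Har^{[i]}_{3}\}_{i}$, and $\epsilon(S_{ij}(1,1),\lu)\leq 1$ for distinct $i,j$ by the argument of \cite[Lemma 5.5(1)]{Tanabe2019}, so $\lu\in\Omega_{M(1)^{+}}(\mN_{\zero,\zero,\zero})$ generates an irreducible $A(M(1)^{+})$-module. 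Here the key subtlety is which irreducible $M(1)^{+}$-module $\lu$ generates: because $\omega^{[1]}_{1}\lu=\lu$ and $\omega^{[i]}_{1}\lu=0$ for $i\neq 1$, the $A(M(1)^{+})$-module $\C\lu$ must be $M(1)^{-}(0)$ (the lowest space of $M(1)^{-}$, one-dimensional with exactly this eigenvalue pattern), not any $M(1,\mu)$ — one checks this by evaluating $\Lambda_{ij}$, $E^{u}_{ij}$, $E^{t}_{ij}$ on $\lu$ as in the end of Lemma \ref{lemma:2lambda(not)inlattice}. Applying \cite[Corollary 5.9]{Tanabe2019} (or \cite[Lemma 7.1]{Tanabe2019}) then gives $M(1)^{+}\cdot\lu\cong M(1)^{-}$, and by Lemma \ref{lemma:generate-completely} the $V_{\lattice}^{+}$-submodule $V_{\lattice}^{+}\cdot\lu$ is completely reducible and (being a lift of a generator of $V_{\lattice}^{-}$) maps isomorphically onto $V_{\lattice}^{-}$ under $\pi$, so the sequence splits. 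The main obstacle I anticipate is bookkeeping the two different $\epsilon$-values $0$ and $-1$ at $i=1$ versus $i\neq 1$ — that is, verifying that Lemma \ref{lemma:M(1)-minus-expansion} applies with the correct part at each index and that the resulting data still forces $\lu\in\Omega_{M(1)^{+}}(\mN)$; everything else is a transcription of the positive-definite/rank-one arguments already in place.
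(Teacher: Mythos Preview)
Your overall strategy---lift the bottom of $M(1)^{-}\subset V_{L}^{-}$ into $N$, verify the lift lies in $\Omega_{M(1)^{+}}(N)$ via Lemma~\ref{lemma:M(1)-minus-expansion}, then apply Lemma~\ref{lemma:generate-completely}---matches the paper's. But there is a genuine gap: you treat $M(1)^{-}(0)$ as one-dimensional and work with a single lift $u$, asserting $\C u\cong M(1)^{-}(0)$ as $A(M(1)^{+})$-modules. In fact $M(1)^{-}(0)=\fh(-1)\vac$ has dimension $d=\rank L$, and for $d>1$ the line $\C u$ is not even $A(M(1)^{+})$-stable: the matrix-unit elements $E^{u}_{ij}$ move $h^{[1]}(-1)\vac$ to (multiples of) the other $h^{[i]}(-1)\vac$, so the $A(M(1)^{+})$-submodule generated by $u$ necessarily involves lifts you have not controlled. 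Two related slips: $h^{[1]}(-1)\vac\notin(V_{L}^{-})_{\zero,\zero,\zero}$ (that space is $\{0\}$; the correct parameter is $\zeta^{(1)}=(\delta_{1j})_{j}$), and \eqref{eq:epsilonVlambda+lattice(E(alpha)-1} concerns $e^{\lambda}$, not $h(-1)\vac$, so it does not give the $\epsilon$-lower-bounds you claim.

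The paper repairs this by lifting \emph{all} $d$ vectors $\alpha^{[i]}(-1)\vac$ to $u^{[i]}\in N_{\zeta^{(i)},\zeta^{(i)},\zeta^{(i)}}$ and setting $U=\sum_{i}\C u^{[i]}$; using $\Ext^{1}_{M(1)^{+}}(M(1)^{-},M(1,\alpha))=0$ together with $M(1)^{+}_{1}=0$ one checks $U\cong M(1)^{-}(0)$. Lemma~\ref{lemma:M(1)-minus-expansion} is then applied to each $u^{[i]}$ exactly as you outlined (part~(1) at index $i$, part~(2) at $j\neq i$). Finally, to bound $\epsilon(S_{ij}(1,1),u)$ uniformly over $u\in U$, the paper does not cite \cite[Lemma~5.5(1)]{Tanabe2019} directly but instead passes to a second orthogonal basis $\tilde\alpha^{[1]},\tilde\alpha^{[2]}$ and expresses $S_{21}(1,1)$ as a linear combination of $\tilde\omega^{[1]},\omega^{[1]},\omega^{[2]}$, each already shown to satisfy $\epsilon(\,\cdot\,,u)\leq 1$.
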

\begin{proof}
Let
$0\rightarrow V_{\lattice}^{+}\rightarrow \mN\overset{\pi}{\rightarrow} V_{\lattice}^{-}\rightarrow 0$
be an exact sequence of weak $V_{\lattice}^{+}$-modules.
By Lemma \ref{lemma:lattice-basis-change}, we can take a sublattice $\oplus_{i=1}^{\rankL}\Z\alpha^{[i]}$ of $\lattice$ 
so that
$\langle\alpha^{[i]},\alpha^{[i]}\rangle\not\in\{0,2\}$ for all $i=1,\ldots,\rankL$ and
$\langle\alpha^{[i]},\alpha^{[j]}\rangle=0$ for all pairs of distinct elements $i,j\in\{1,\ldots,\rankL\}$.
We define $p_i:=\langle\alpha^{[i]},\alpha^{[i]}\rangle$ and $h^{[i]}:=\alpha^{[i]}/\sqrt{p_i}$.
For $i=1,\ldots,\rankL$ and $\zeta^{(i)}=(\delta_{ij})_{j=1}^{\rankL}$, 
let $u^{[i]}\in N_{\zeta^{(i)},\zeta^{(i)},\zeta^{(i)}}$ such that 
\begin{align}
\pi(u^{[i]})=\alpha^{[i]}(-1)\vac\in V_{\lattice}^{-}.
\end{align}
Since $(V_{\lattice}^{+})_{\zeta^{(i)},\zeta^{(i)},\zeta^{(i)}}=\zero$,
we have 
\begin{align}
\label{eq:(omega[j]-deltaij)u[i]}
(\omega^{[j]}_{1}-\delta_{ij})u^{[i]}&=
(\Har^{[j]}_{3}-\delta_{ij})u^{[i]}=
(\Har^{\langle 6\rangle, [j]}_{5}-\delta_{ij})u^{[i]}=0
\end{align}
for $j=1,\ldots,\rankL$.
We define
\begin{align}
U&=\sum_{i=1}^{\rankL}\C \lu^{[i]}.
\end{align} 
Since $\Ext^{1}_{M(1)^{+}}(M(1)^{-},M(1,\alpha))=0$ for all $\alpha\in\fh\setminus\{0\}$ by \cite[Proposition 5.8]{Tanabe2019}
and $M(1)^{+}_{1}=0$, the vector space $U$ is an $A(M(1)^{+})$-module  which is isomorphic to $M(1)^{-}_{1}=M(1)^{-}(0)$.
For any pair of distinct element $i,j\in\{1,\ldots,\rankL\}$ and $l,m\in\Z_{>0}$, we set 
\begin{align}
\widetilde{S}_{ij}(l,m)&=\alpha^{[i]}(-l)\alpha^{[j]}(-m),
\end{align}
which is a non-zero scalar multiple of ${S}_{ij}(l,m)$.
In order to show $U\subset\Omega_{M(1)^{+}}(\mN)$, it is convenient to use $\widetilde{S}_{ij}(l,m)$ instead of ${S}_{ij}(l,m)$.
We fix $i\in\{1,\ldots,\rankL\}$.
For $k=1,\ldots,\rankL$, since $\omega^{[k]}_{2}\lu^{[i]}\in \C \vac$, we have $\Har^{[k]}_{3}\omega^{[k]}_{2}\lu^{[i]}=0$. 
For $j\in\{1,\ldots,\rankL\}$ with $j\neq i$, by \eqref{eq:(omega[j]-deltaij)u[i]} and \eqref{eq;[H^4(0),omega(m)]-1} with $m=1$,
\begin{align}
-\omega^{[i]}_{2}\lu^{[i]}&=[\Har^{[i]}_{3},\omega^{[i]}_{2}]\lu^{[i]}=-\textcolor{black}{3}\Har^{[i]}_{4}\lu^{[i]}-\omega^{[i]}_{2}\lu^{[i]},\nonumber\\
0&=[\Har^{[j]}_{3},\omega^{[j]}_{2}]\lu^{[i]}=-\textcolor{black}{3}\Har^{[j]}_{4}\lu^{[i]}-\omega^{[j]}_{2}\lu^{[i]}
\end{align}
and hence $\Har^{[i]}_{4}\lu^{[i]}=0$ and $\textcolor{black}{3}\Har^{[j]}_{4}\lu^{[i]}=-\omega^{[j]}_{2}\lu^{[i]}$. 
By Lemma \ref{lemma:M(1)-minus-expansion},
we have $\epsilon_{\module}(\omega^{[i]},\lu^{[i]})=1, \epsilon_{\module}(\ExB(\alpha^{[i]}),\lu^{[i]})=0$ 
 and $\epsilon_{\module}(\omega^{[j]},\lu^{[i]})\leq 0$, $\epsilon_{\module}(\ExB(\alpha^{[j]}),\lu^{[i]})=-1$ for $j\neq i$.

We shall show that $\epsilon_{\module}(\widetilde{S}_{ij}(1,1),\lu)\leq 1$ for all non-zero $\lu\in U$ and all pairs of distinct elements $i,j\in\{1,\ldots,\rankL\}$.
We may assume $(i,j)=(2,1)$.
We take a pair of non-zero integers $x$ and $y$ so that $p_1 x^2+p_2 y^2\not\in\{0,\pm 1,\pm 2\}$
and define the sequence $\tilde{\alpha}^{[1]},\tilde{\alpha}^{[2]},\ldots,\tilde{\alpha}^{[\rankL]}\in\lattice$ by 
\begin{align}
\tilde{\alpha}^{[1]}&=x\alpha^{[1]}+y\alpha^{[2]},\nonumber\\
\tilde{\alpha}^{[2]}&=-yp_2\alpha^{[1]}+xp_1\alpha^{[2]},\mbox{ and }\nonumber\\
\tilde{\alpha}^{[i]}&=\alpha^{[i]},\ i=3,\ldots,\rankL.
\end{align}
For $i=1,\ldots,\rankL$ and $j=3,\ldots,\rankL$, we also define 
\begin{align}
\tilde{h}^{[i]}&=\frac{\tilde{\alpha}^{[i]}}{\sqrt{\langle \tilde{\alpha}^{[i]},\tilde{\alpha}^{[i]}\rangle}},\nonumber\\
\tilde{\omega}^{[i]}&=\frac{1}{2}\tilde{h}^{[i]}(-1)^2\vac,\nonumber\\
\tilde{\Har}^{[i]}&=\dfrac{1}{3}\tilde{h}^{[i]}(-3)h^{[i]}(-1)\vac-\dfrac{1}{3}\tilde{h}^{[i]}(-2)^2\vac,\nonumber\\
\tilde{u}^{[1]}&=x\lu^{[1]}+y\lu^{[2]},\nonumber\\
\tilde{u}^{[2]}&=-yp_2\lu^{[1]}+xp_1\lu^{[2]},\nonumber\\
\tilde{u}^{[j]}&=\lu^{[j]}.
\end{align}
Then, $\langle \tilde{\alpha}^{[i]},\tilde{\alpha}^{[i]}\rangle\not\in\{0,2\}$ for all $i=1,\ldots,\rankL$,
 $\langle \tilde{\alpha}^{[i]},\tilde{\alpha}^{[j]}\rangle=0$ for all pairs of distinct element $i,j\in\{1,\ldots,\rankL\}$,
$\tilde{\omega}^{[i]}_{1}\tilde{\lu}^{[j]}=\tilde{\Har}^{[i]}_{3}\tilde{\lu}^{[j]}=\delta_{ij}\tilde{\lu}^{[j]}$, and
$U=\sum_{i=1}^{\rankL}\C \tilde{\lu}^{[i]}$.
The same argument as above shows that $\epsilon(\tilde{\omega}^{[i]},\lu)\leq 1$ for all non-zero $\lu\in U$ and $i=1,\ldots,\rankL$.
For a non-zero $\lu\in U$, since $\epsilon(\tilde{\omega}^{[1]},\lu), \epsilon(\omega^{[k]},\lu)\leq 1$ for all $k=1,\ldots,\rankL$, and
\begin{align}
\widetilde{S}_{21}(1,1)&=\frac{1}{xy}(\langle\tilde{\alpha}^{[1]},\tilde{\alpha}^{[1]}\rangle
\tilde{\omega}^{[1]}-\mn_1x^2\omega^{[1]}-\mn_2y^2\omega^{[2]}),
\end{align}
we have $\epsilon(\widetilde{S}_{21}(1,1),\lu)\leq 1$ and hence $U\subset\Omega_{M(1)^{+}}(\mN)$.
Now, the result follows from Lemma \ref{lemma:generate-completely}.
\end{proof}

\begin{proof}[(Proof of Theorem \ref{theorem:main})]
Since \cite[Lemma 7.3]{Tanabe2019} shows that 
every weak $V_{\lattice}^{+}$-module has an irreducible weak $V_{\lattice}^{+}$-module,
it is enough to show that $\Ext^{1}_{V_{\lattice}^{+}}(\module,\mW)=0$
for any pair of irreducible weak $V_{\lattice}^{+}$-modules $\module$ and $\mW$ (cf. the last half of the proof of \cite[Theorem 2.8]{Abe2005}).
The result follows from Corollary \ref{corollary:many-ext=0}, Lemma \ref{lemma:2lambda(not)inlattice},
Lemma \ref{lemma:Ext1V(Vlattice+,V\lattice-}, and 
Lemma \ref{lemma:Ext1V(Vlattice-,V\lattice+}.
\end{proof}

\providecommand{\MR}{\relax\ifhmode\unskip\space\fi MR }
% \MRhref is called by the amsart/book/proc definition of \MR.
\providecommand{\MRhref}[2]{%
  \href{http://www.ams.org/mathscinet-getitem?mr=#1}{#2}
}
\providecommand{\href}[2]{#2}

%\bibliography{Reference}
%\bibliography{Reference-original}
%\bibliographystyle{jecon}
%\bibliographystyle{Science}
%\bibliographystyle{aomplain}
%\bibliographystyle{plain}
%\bibliographystyle{amsplain}
%\bibliographystyle{acmtrans}
%\bibliographystyle{plainnat}
\bibliographystyle{ijmart}
\end{document}